 \newtheorem{theorem}{Theorem}[section]
\newtheorem{lemma}[theorem]{Lemma}
\newtheorem{proposition}[theorem]{Proposition}
\theoremstyle{definition}
\theoremstyle{remark}
\newtheorem{remark}[theorem]{Remark}
\numberwithin{equation}{section}
 \newtheorem*{theorem*}{Theorem}
\numberwithin{equation}{section}
\numberwithin{equation}{section}
\begin{document}
\title[ Operator mean  inequalities for sector matrices ]{ Operator mean inequalities for sector matrices }

\author[M. Khosravi  ]{Maryam Khosravi$^{1}$}
%\address[M. Khosravi]{ Department of Pure Mathematics, Faculty of Mathematics and Computer, Shahid Bahonar University of Kerman, Kerman, Iran} \email{khosravi$_-$m @uk.ac.ir}

\author[A. Sheikhhosseini]{  Alemeh
Sheikhhosseini$^2$}
%\address[A. Sheikhhosseini]{Department of Pure Mathematics, Faculty of Mathematics and Computer, Shahid Bahonar University of Kerman, Kerman, Iran}\email{sheikhhosseini@uk.ac.ir;hosseini8560@gmail.com}

\author[S. Malekinejad]{Somayeh Malekinejad$^{3*}$}
%\address[S. Malekinejad]{Department of Mathematics\\Payame Noor University, P.O. Box 19395-3697\\Tehran, Iran.}\email{maleki60313@gmail.com}

\thanks{*Corresponding author}
 \maketitle

  \begin{center}
  	${}^1$
  	{\small \it Department of Pure Mathematics}\\
  	{\small \it Faculty of Mathematics and Computer}\\
  	{\small  \it Shahid Bahonar University of Kerman,}
  	{\small \it Kerman, Iran} \\
    	{\small \it khosravi$_-$m @uk.ac.ir} \\	\vspace*{5mm}
  	${}^2$
{\small \it Department of Pure Mathematics}\\
  	{\small \it Faculty of Mathematics and Computer}\\
  	{\small  \it Shahid Bahonar University of Kerman,}
  	{\small \it Kerman, Iran} \\
{\small \it sheikhhosseini@uk.ac.ir;hosseini8560@gmail.com} \\\vspace*{5mm}
   ${}^3$
   {\small \it Department of  Mathematics}\\
  	{\small \it Payame Noor University, P.O. Box 19395-3697}\\
  	{\small  \it Tehran, Iran.}\\
    	{\small \it maleki60313@gmail.com} \\
  \end{center}
\begin{abstract}\noindent
In this note, some inequalities involving operator means of sectorial matrices are proved which are generalizations and refinements of previous known results. Among them, let $A$ and $B$ be two accretive matrices with $A,B\in\mathcal{S}_{\theta}$,  $0 < mI \leqslant
A, B \leqslant MI$ for positive real numbers $ M, m, \, \sigma$
 be an operator mean and $\sigma^{*}$ be the adjoint
 mean of $ \sigma.$ If $\sigma^*\leqslant
\sigma_1,\sigma_2\leqslant \sigma$ and $\Phi$ is a
positive unital linear map, then
$$\Phi^{p}\Re(A \sigma_{1} B) \leqslant \sec^{2p}\theta\alpha^{p} \Phi^{p}\Re(A \sigma_{2}
B),$$ where $$ \alpha= \max \left \lbrace K, 4^{1-\frac{2}{p}}K
\right \rbrace,$$ and $ K= \frac{(M+m)^2}{4mM}$ is the
Kantorovich constant.
\end{abstract}
\hspace*{-2.7mm} { KEYWORDS:}{ sector matrix, operator mean, unital positive linear map. }\\
\hspace*{-2.7mm} { MSC[2010]:}{ 47A64,  47A63.}

 \section{introduction and preliminary}%\vspace{.2cm} \noindent
Let $\Bbb M_n$ denote the set of $n\times n$ complex matrices and  $A^*$ denote the conjugate transpose of $A$ for
$A\in\Bbb M_n$.

%%%%%%%%%%%%%%%%%%%%%%%%%%%%%
A Hermitian matrix $A\in \Bbb M_n$  is called positive semidefinite (denoted by $A\geqslant 0$)  if $\langle Ax, x\rangle \geqslant0 $ for all vectors $x \in \mathbb{C}^n$. If in addition $A$ is invertible, then $A$ is called positive definite (denoted by
$A>0$).

 A matrix $A\in M_n$ is called accretive if  its real part $\Re A=\frac{A+A^*}{2}$ is positive definite.

If $A$ is accretive matrix and $W(A)$ is the numerical range of $A$, then
\begin{equation*}
W(A)\subseteq\{z\in \Bbb C:\Re z>0, \vert\Im z\vert\leqslant(\Re z)\tan\theta\}.
\end{equation*}
for some $0\leqslant \theta <\frac{\pi}{2}$. In this case, we will write $A\in {\mathcal S}_{\theta}$.

By definition, it is easily seen that $A\in{\mathcal S}_{\theta}$ is invertible and we have the following well-known inequality \cite{Lin1, Lin2}:
\begin{equation}\label{1}
\Re(A^{-1})\leqslant\Re^{-1}(A)\leqslant \sec^2\theta\ \Re(A^{-1}).
\end{equation}

An operator mean $\sigma$ in the sense of Kubo-Ando \cite{a} is defined by an
operator monotone function $f:(0,\infty)\to(0,\infty)$ with $f(1) = 1$ (briefly we write $f\in\mathbf{m}$) as
$$A\sigma B = A^{1/2}f(A^{-1/2}BA^{-1/2})A^{1/2},$$
for positive invertible matrices $A$ and $B$. The function $f$ is called the representing function of $\sigma$. Recently, Bedrani, Kittaneh and sababheh in \cite{bed1} proved that this definition can be used for accretive matrices too.

For $0\leqslant v\leqslant1$, the $v$-weighted arithmetic,  geometric and Harmonic  means are defined, respectively, by
\begin{align*}
&A\nabla_v B=v A+(1-v)B,\\
&A\sharp_v B=A^{1/2}(A^{-1/2}BA^{-1/2})^{v}A^{1/2},\\
&A!_vB=(vA^{-1}+(1-v)B^{-1})^{-1}.
\end{align*}

 Let $ \sigma $ be an operator mean with representing function $ f. $ The operator
mean with representing function $ f(t^{-1})^{-1} $ is called the
adjoint of $ \sigma $ and denoted by $ \sigma^{*}. $ It follows
from definition that
$$ A \sigma^{*}B=(A^{-1}\sigma B^{-1})^{-1}. $$
For two operator means $\sigma_f$ and $\sigma_g$ with representing functions $f$ and $g$, we write $\sigma\leqslant\tau$ if $A\sigma B\leqslant A\tau B$ for every two positive matrices $A$ and $B$ or equivalently if $f(t)\leqslant g(t)$ for all positive $t\in\mathbb{R}$.

A linear map $\Phi:M_n\rightarrow M_n$ is called positive if
$\Phi(A)\geq0$ whenever $A\geq0$. If $\Phi(I)=I$, where $I$ denoted the identity matrix, then we say that $\Phi$ is unital.

Khosravi et al. in \cite{k17} prove that if $0<mI\leqslant A, B\leqslant MI$, then for every positive unital linear map $\Phi$ and operator means $\sigma_1$ and $\sigma_2$ between some $\sigma$ and $\sigma^*$,
\begin{equation}\label{2.5}
\Phi^p(A\sigma B)\leqslant \alpha ^p \Phi^p(A\sigma^* B),
\end{equation}
where $p>0$, $\alpha=\max\{K,4^{1-\frac{2}{p}}K\}$ and $K=\frac{(M+m)^2}{4mM}$ is the Kantorovich constant.

In addition, they showed that if $\sigma_1$ and $\sigma_2$ between $\nabla_{\nu}$ and $!_{\nu}$, then
\begin{equation}\label{13}
f\left(\Phi(A))\sigma_1f(\Phi( B)\right)\leqslant Kf\left(\Phi(A\sigma_2 B)\right),
\end{equation}
for every nonzero operator monotone function $f$ on $[0,\infty)$.

The main purpose of this article is to present the sectorial version of these inequalities.

Throughout, $K$ considered as the Kantorovich constant $\frac{(M+m)^2}{4mM}$.

\section{main results}%\vspace{.2cm} \noindent
We start this section with some lemmas which are necessary for proving our main results.

\begin{lemma}\label{2}\cite{bed1}
Let $ A, B \in {\mathcal S}_{\theta}$. Then $A\sigma B \in {\mathcal S}_{\theta}$ and
\begin{equation*}
\Re A\sigma \Re B\leqslant \Re (A\sigma B)\leqslant\sec^2\theta(\Re A\sigma \Re B).
\end{equation*}
\end{lemma}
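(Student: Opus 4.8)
The plan is to build the general operator mean from elementary pieces via the Kubo--Ando integral representation and to reduce everything to the weighted harmonic mean, for which the key inequality \eqref{1} is available. First I would record two structural facts: for any matrix $X$ one has $\Re(X^{*}CX)=X^{*}(\Re C)X$, and the class $\mathcal{S}_{\theta}$ is a convex cone that is closed under positive scalar multiples, sums, inversion and norm limits. Writing the representing function $f$ of $\sigma$ in its integral form $f(t)=a+bt+\int_{(0,\infty)}\frac{(1+\lambda)t}{\lambda+t}\,d\mu(\lambda)$ translates into $A\sigma B=aA+bB+\int_{(0,\infty)}(1+\lambda)\,(A^{-1}+\lambda B^{-1})^{-1}\,d\mu(\lambda)$. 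Each integrand is the inverse of a sum of members of $\mathcal{S}_{\theta}$, hence again lies in $\mathcal{S}_{\theta}$; since $\mathcal{S}_{\theta}$ is a closed cone, this simultaneously proves $A\sigma B\in\mathcal{S}_{\theta}$ and reduces the two-sided estimate to the single building block $A!_{\nu}B=(\nu A^{-1}+(1-\nu)B^{-1})^{-1}$, because the affine part $aA+bB$ contributes $a\,\Re A+b\,\Re B$ exactly and the same measure $\mu$ reproduces $\Re A\,\sigma\,\Re B$ on the right-hand side.

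For the upper bound on the building block, set $C=\nu A^{-1}+(1-\nu)B^{-1}\in\mathcal{S}_{\theta}$, so that $A!_{\nu}B=C^{-1}$. Applying \eqref{1} to $A$ and to $B$ in the form $\Re(A^{-1})\geqslant\sec^{-2}\theta\,(\Re A)^{-1}$ gives $\Re C\geqslant\sec^{-2}\theta\bigl[\nu(\Re A)^{-1}+(1-\nu)(\Re B)^{-1}\bigr]$, and inverting (which reverses the order) yields $(\Re C)^{-1}\leqslant\sec^{2}\theta\,(\Re A!_{\nu}\Re B)$. Combining this with the first inequality in \eqref{1}, namely $\Re(C^{-1})\leqslant(\Re C)^{-1}$, produces the desired $\Re(A!_{\nu}B)\leqslant\sec^{2}\theta\,(\Re A!_{\nu}\Re B)$, and integrating against $d\mu$ delivers the upper bound for $\sigma$.

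The lower bound is where I expect the real difficulty, since feeding \eqref{1} naively into the same chain only gives the weaker estimate $\Re(A!_{\nu}B)\geqslant\sec^{-2}\theta\,(\Re A!_{\nu}\Re B)$, losing exactly the factor one wants to avoid. To recover the sharp bound I would abandon \eqref{1} here and exploit adjoint symmetry: because $f$ has real coefficients, the extended functional calculus satisfies $(A\sigma B)^{*}=A^{*}\sigma B^{*}$, whence $\Re(A\sigma B)=(A\sigma B)\,\nabla\,(A^{*}\sigma B^{*})$. An application of the joint concavity of the mean then bounds this below by $(A\nabla A^{*})\,\sigma\,(B\nabla B^{*})=\Re A\,\sigma\,\Re B$. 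The crux, and the step I expect to cost the most work, is justifying this joint concavity (equivalently the arithmetic--geometric type inequality $(X_{1}\sigma Y_{1})\nabla(X_{2}\sigma Y_{2})\geqslant(X_{1}\nabla X_{2})\sigma(Y_{1}\nabla Y_{2})$) in the accretive setting rather than the classical positive one; alternatively one may realize $A!_{\nu}B$ as a Schur complement of an accretive block matrix and compare the real part of the Schur complement with the Schur complement of the real part, which is the same obstruction viewed through block matrices.
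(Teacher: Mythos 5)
Your reduction to harmonic building blocks via the Kubo--Ando integral representation is sound, and your upper bound is complete and correct (both halves of \eqref{1} plus antimonotonicity of inversion). Note that the paper itself gives no proof of this lemma --- it is imported from \cite{bed1} --- so the comparison is necessarily with the argument there, which also works block-by-block through this integral representation. The genuine gap is in your lower bound. The inequality you invoke, $(X_{1}\sigma Y_{1})\nabla(X_{2}\sigma Y_{2})\geqslant(X_{1}\nabla X_{2})\sigma(Y_{1}\nabla Y_{2})$, is not joint concavity but its \emph{reverse}: Kubo--Ando means are jointly concave, meaning $(X_{1}\nabla X_{2})\sigma(Y_{1}\nabla Y_{2})\geqslant(X_{1}\sigma Y_{1})\nabla(X_{2}\sigma Y_{2})$ for positive arguments, and the direction you need is false already for positive scalars: with $\sigma=\sharp$, $X_{1}=Y_{2}=4$, $X_{2}=Y_{1}=1$ one gets $(X_{1}\sharp Y_{1})\nabla(X_{2}\sharp Y_{2})=2<\tfrac{5}{2}=(X_{1}\nabla X_{2})\sharp(Y_{1}\nabla Y_{2})$. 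Moreover, for the specific pairs $(A,A^{*})$ and $(B,B^{*})$ your claimed inequality \emph{is} the assertion $\Re A\,\sigma\,\Re B\leqslant\Re(A\sigma B)$ being proved, so the reformulation is circular; you flag it as the crux but supply no argument, and the Schur-complement alternative you mention is, as you note yourself, the same obstruction restated.

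The repair stays entirely inside your framework but needs a different tool at the block level: the extremal (variational) characterization of the weighted harmonic mean. Let $C=\nu A^{-1}+(1-\nu)B^{-1}$, so the block is $C^{-1}$, and write $C^{-*}$ for $(C^{*})^{-1}$. The congruence identities $\Re(C^{-1})=C^{-1}(\Re C)C^{-*}$ and $\Re(A^{-1})=A^{-1}(\Re A)A^{-*}$, $\Re(B^{-1})=B^{-1}(\Re B)B^{-*}$ give, for every vector $x$, upon setting $u=A^{-*}C^{-*}x$ and $v=B^{-*}C^{-*}x$ (which satisfy $\nu u+(1-\nu)v=(\nu A^{-*}+(1-\nu)B^{-*})C^{-*}x=C^{*}C^{-*}x=x$),
\begin{equation*}
\langle\Re(C^{-1})x,x\rangle=\nu\langle(\Re A)u,u\rangle+(1-\nu)\langle(\Re B)v,v\rangle\geqslant\langle(\Re A\,!_{\nu}\,\Re B)x,x\rangle,
\end{equation*}
where the last step uses that for positive definite $P,Q$ one has $\langle(P!_{\nu}Q)x,x\rangle=\min\{\nu\langle Pu,u\rangle+(1-\nu)\langle Qv,v\rangle:\nu u+(1-\nu)v=x\}$. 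This yields $\Re(A!_{\nu}B)\geqslant\Re A\,!_{\nu}\,\Re B$ with no loss of a $\sec^{2}\theta$ factor, and integrating against $d\mu$ (the affine part $a\Re A+b\Re B$ matches exactly) completes the lower bound, slotting directly into the decomposition you already set up.
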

Also we have the
famous Choi's inequality:
\begin{lemma}\label{4}\cite{pec}
Let $ A \in B(H)$ be positive. Then, for every positive unital linear map $\Phi$,
\begin{align*}
\Phi^{-1}(A)\leqslant\Phi(A^{-1}).
\end{align*}
\end{lemma}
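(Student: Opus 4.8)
The statement is exactly the Choi--Davis inequality specialised to the operator convex function $t\mapsto t^{-1}$, so the plan is to deduce it from operator convexity together with the operator Jensen inequality. First I would record that $f(t)=t^{-1}$ is operator convex on $(0,\infty)$, and that for positive definite $A$ the quantity $\Phi^{-1}(A)=(\Phi(A))^{-1}$ is well defined: if $A\geqslant mI>0$ then $\Phi(A)-mI=\Phi(A-mI)\geqslant0$, so $\Phi(A)$ is again positive definite and invertible. Granting the operator Jensen inequality $f(\Phi(A))\leqslant\Phi(f(A))$ for positive unital linear maps, the conclusion is immediate, since $(\Phi(A))^{-1}=f(\Phi(A))\leqslant\Phi(f(A))=\Phi(A^{-1})$.

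A more hands-on route, and the one I would try to write out explicitly, uses a $2\times2$ operator matrix. Because the Schur complement of $A$ in $\left(\begin{smallmatrix}A & I\\ I & A^{-1}\end{smallmatrix}\right)$ vanishes, this block matrix is positive semidefinite; applying $\Phi$ entrywise and using $\Phi(I)=I$ would give $\left(\begin{smallmatrix}\Phi(A) & I\\ I & \Phi(A^{-1})\end{smallmatrix}\right)\geqslant0$, and a second Schur complement argument would then yield $\Phi(A^{-1})\geqslant(\Phi(A))^{-1}$, as desired.

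The step I expect to be the main obstacle is justifying that the block matrix remains positive after applying $\Phi$: this is precisely the demand that $\Phi$ preserve positivity of $2\times2$ operator matrices, i.e.\ $2$-positivity, which is formally stronger than the bare positivity assumed in the statement. The cheap scalar substitution $A^{-1}\geqslant 2tI-t^{2}A$, valid for every real $t$ since $(A^{-1}-tI)A(A^{-1}-tI)\geqslant0$, does survive a merely positive $\Phi$, but after applying $\Phi$ and optimising $t$ against a unit vector $x$ it only delivers the weaker numerical bound $\langle\Phi(A^{-1})x,x\rangle\geqslant\langle\Phi(A)x,x\rangle^{-1}$, and this does not upgrade to the L\"owner order because $\langle(\Phi(A))^{-1}x,x\rangle$ already exceeds $\langle\Phi(A)x,x\rangle^{-1}$. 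Securing the full operator inequality for a merely positive unital map is exactly the content of the operator Jensen inequality, so I would ultimately close the argument by invoking it (as in the cited reference) rather than leaning on the block-matrix manipulation.
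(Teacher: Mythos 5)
Your proposal is correct, and it is worth noting at the outset that the paper itself gives no proof of this lemma at all: it is quoted as a known result with a citation to the Mond--Pe\v{c}ari\'c monograph \cite{pec}. So your first route --- operator convexity of $t\mapsto t^{-1}$ plus the Davis--Choi--Jensen inequality $f(\Phi(A))\leqslant\Phi(f(A))$, which does hold for merely positive unital maps --- is a legitimate closure and is in the same spirit as the paper's citation. Your diagnosis of the block-matrix route is also accurate as far as it goes, but you abandoned it prematurely: the standard repair is that $A$, $A^{-1}$ and $I$ all lie in the commutative C*-algebra generated by $A$, and a positive map restricted to a commutative domain is automatically completely positive, so the $2\times2$ argument is valid after all. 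Even more elementarily (and entirely within $\Bbb M_n$, which is the paper's setting), write the spectral decomposition $A=\sum_i\lambda_iP_i$, so that
\[
\begin{pmatrix}\Phi(A) & I\\ I & \Phi(A^{-1})\end{pmatrix}
=\sum_i\begin{pmatrix}\lambda_i & 1\\ 1 & \lambda_i^{-1}\end{pmatrix}\otimes\Phi(P_i)\geqslant0,
\]
since each $\begin{pmatrix}\lambda_i & 1\\ 1 & \lambda_i^{-1}\end{pmatrix}$ is positive semidefinite (zero determinant, positive trace) and a Kronecker product of positive semidefinite matrices is positive semidefinite; here the positivity of $\Phi$ is used only on the spectral projections $P_i$, so no $2$-positivity is ever invoked, and the Schur-complement criterion (using $\Phi(A)\geqslant mI>0$, which you already verified) then gives $\Phi(A^{-1})\geqslant\Phi^{-1}(A)$. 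This rescues exactly the hands-on proof you wanted and avoids treating the operator Jensen inequality as a black box; your observation that the scalar substitution $A^{-1}\geqslant 2tI-t^2A$ only yields the weaker numerical bound is correct and is a genuine (if ultimately avoidable) obstruction.
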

By these lemmas, we can prove the next proposition.
\begin{proposition}\label{pro1}
Let $A,B\in {\mathcal S}_{\theta}$ for some $0\leqslant\theta\leqslant\pi/2$ such that $0<mI\leqslant \Re A,\Re B\leqslant MI$. If $\sigma$ is an arbitrary mean and  $\sigma_1,\sigma_2$ are two operator means between $\sigma$ and $\sigma^*$, then
\begin{equation}\label{m1}
\cos^2\theta\Phi \Re(A\sigma_1 B)+mM\Phi^{-1}\Re(A\sigma_2B)\leqslant (M+m)I
\end{equation}
for every  positive unital linear map $\Phi$.
\end{proposition}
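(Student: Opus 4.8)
The plan is to strip off the sectorial data immediately and reduce everything to a clean inequality between the positive parts $C:=\Re A$ and $D:=\Re B$, which satisfy $mI\leqslant C,D\leqslant MI$. Applying Lemma \ref{2} to $\sigma_1$ gives $\Re(A\sigma_1 B)\leqslant\sec^2\theta\,(C\sigma_1 D)$, so that $\cos^2\theta\,\Re(A\sigma_1 B)\leqslant C\sigma_1 D$; applying it to $\sigma_2$ gives the lower bound $\Re(A\sigma_2 B)\geqslant C\sigma_2 D$. Since $\Phi$ is positive (hence monotone) and linear, the first estimate yields $\cos^2\theta\,\Phi\Re(A\sigma_1 B)\leqslant\Phi(C\sigma_1 D)$, while the second, after applying $\Phi$ and then inverting (which reverses order on positive definite matrices), yields $\Phi^{-1}\Re(A\sigma_2 B)\leqslant[\Phi(C\sigma_2 D)]^{-1}$. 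Adding these, it suffices to prove $\Phi(C\sigma_1 D)+mM\,[\Phi(C\sigma_2 D)]^{-1}\leqslant(M+m)I$.

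Next I would use Choi's inequality (Lemma \ref{4}) to move the inverse inside $\Phi$, namely $[\Phi(C\sigma_2 D)]^{-1}\leqslant\Phi\big((C\sigma_2 D)^{-1}\big)$. Combined with the linearity of $\Phi$, the target becomes $\Phi\big(C\sigma_1 D+mM(C\sigma_2 D)^{-1}\big)\leqslant(M+m)I$, and since $\Phi$ is unital and monotone it is enough to establish the purely matricial inequality
\begin{equation*}
C\sigma_1 D+mM(C\sigma_2 D)^{-1}\leqslant(M+m)I.
\end{equation*}
Here the hypothesis $\sigma^*\leqslant\sigma_1,\sigma_2\leqslant\sigma$ is what prevents the first summand from being large while the second is also large: $\sigma_1\leqslant\sigma$ gives $C\sigma_1 D\leqslant C\sigma D$, while $\sigma_2\geqslant\sigma^*$ gives $C\sigma_2 D\geqslant C\sigma^* D=(C^{-1}\sigma D^{-1})^{-1}$ and hence $(C\sigma_2 D)^{-1}\leqslant C^{-1}\sigma D^{-1}$. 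Thus it remains to prove $C\sigma D+mM\,C^{-1}\sigma D^{-1}\leqslant(M+m)I$ for the single mean $\sigma$.

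This last step is the main obstacle, since for a mean that is not self-adjoint the two summands do not collapse into a single spectral expression in one variable, so the scalar Kantorovich inequality cannot be applied directly. The device I would use is homogeneity of Kubo--Ando means, which lets me write $mM\,C^{-1}\sigma D^{-1}=(mMC^{-1})\sigma(mMD^{-1})$. Because $\sigma$ is jointly concave and positively homogeneous it is superadditive, so
\begin{equation*}
C\sigma D+(mMC^{-1})\sigma(mMD^{-1})\leqslant(C+mMC^{-1})\sigma(D+mMD^{-1}).
\end{equation*}
The scalar inequality $t+mMt^{-1}\leqslant M+m$ for $t\in[m,M]$, read through the functional calculus, gives $C+mMC^{-1}\leqslant(M+m)I$ and $D+mMD^{-1}\leqslant(M+m)I$; monotonicity and normalization of the mean then yield $(C+mMC^{-1})\sigma(D+mMD^{-1})\leqslant(M+m)I\,\sigma\,(M+m)I=(M+m)I$. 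Tracing the chain back through $\Phi$ completes the argument. The only genuinely delicate point is the reduction to the extremal mean $\sigma$ followed by the superadditivity estimate; everything else is monotonicity bookkeeping.
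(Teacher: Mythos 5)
Your proof is correct and follows essentially the same route as the paper's: both rest on Lemma \ref{2}, Choi's inequality, the comparisons $\sigma_1\leqslant\sigma$ and $\sigma^*\leqslant\sigma_2$ with the adjoint identity $(C\sigma^* D)^{-1}=C^{-1}\sigma D^{-1}$, and then superadditivity plus homogeneity of the mean together with the scalar bound $t+mMt^{-1}\leqslant M+m$ on $[m,M]$. The only difference is organizational: you reduce the sectorial statement to the positive-matrix inequality and prove it last, whereas the paper proves that inequality first and then applies $\Phi$, Lemma \ref{2}, and Choi.
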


\begin{proof}
From $ 0 < mI \leqslant \Re A \leqslant MI$, it follows that $$(MI-\Re A)(\frac{1}{m}I-\Re^{-1}A)\geq0,$$ which is equivalent to $$\Re A + Mm\Re^{-1}(A) \leqslant (M+m)I.$$ Similarly $$\Re B + Mm\Re^{-1}(B) \leqslant (M+m)I.$$
Since every operator mean is subadditive and monotone, we have
\begin{align*}
\Re A \sigma \Re B+ mM(\Re^{-1}(A)\sigma \Re^{-1}(B)) &\leqslant (\Re A + Mm\Re^{-1}(A))\sigma (\Re B + Mm\Re^{-1}(B) )\\
& \leqslant (M+m)I\sigma(M+m)I\\
&= (M+m)I
\end{align*}
Thus, for a positive unital linear map $ \Phi, $ we obtain
\begin{align*}\label{a1}
\Phi(\Re A \sigma \Re B)+ mM\Phi(\Re^{-1}(A)\sigma \Re^{-1}(B)) \leqslant (M+m)I.
\end{align*}
 Let $\sigma^*\leqslant\sigma_1,\sigma_2\leqslant \sigma$. Using  Lemma \ref{2} and Choi's inequality, we get
\begin{align*}
\cos^2\theta\Phi \Re(A\sigma_1 B)+mM\Phi^{-1}\Re(A\sigma_2B)&\leqslant
\Phi(\Re A \sigma_1 \Re B)+ mM\Phi^{-1}(\Re A\sigma_2 \Re B)\\
&\leqslant
\Phi(\Re A \sigma \Re B)+ mM\Phi^{-1}(\Re A\sigma^* \Re B)\\
 &\leqslant \Phi(\Re A \sigma \Re B)+ mM\Phi((\Re A\sigma^{*} \Re B)^{-1} )\\
&=\Phi(\Re A \sigma \Re B)+ mM\Phi(\Re^{-1}A \sigma \Re^{-1}B )\\
& \leqslant (M+m)I,
\end{align*}
and this complete the proof.
\end{proof}
%%%%%%%%%%%%%%%%%%%%%%%%%%%%%%%%%%%
\begin{lemma}\label{2.1}\cite{bh}
Let $A,B\geq0$. Then
\begin{equation*}
\|AB\|\leqslant\frac{1}{4}\|A+B\|^2.
\end{equation*}
\end{lemma}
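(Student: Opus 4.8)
The plan is to reduce to a normalised statement and then reach the sharp constant through a singular-value inequality, because the one-line symmetric estimates fall just short. First I would exploit homogeneity: replacing $(A,B)$ by $(tA,tB)$ multiplies both $\|AB\|$ and $\|A+B\|^{2}$ by $t^{2}$, so after scaling I may assume $\|A+B\|=2$, and the claim becomes the concrete assertion that $A,B\geqslant0$ with $A+B\leqslant 2I$ forces $\|AB\|\leqslant1$. I would also record the reformulation $\|AB\|=s_{1}(AB)$, the largest singular value of the generally non-normal product $AB$; this is the quantity that must be controlled.

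Next I would explain why the obvious applications of the matrix arithmetic--geometric-mean inequality $2\,s_{j}(XY^{*})\leqslant s_{j}(X^{*}X+Y^{*}Y)$ do not close the gap, so as to locate the real difficulty. Taking $X=A^{1/2}$, $Y=B^{1/2}$, the case $j=1$ gives $2\|A^{1/2}B^{1/2}\|\leqslant\|A+B\|$, that is $\rho(AB)=\|A^{1/2}B^{1/2}\|^{2}\leqslant\tfrac14\|A+B\|^{2}$; but this bounds only the spectral radius, which for a non-normal $AB$ is strictly below $\|AB\|$. Taking instead $X=A$, $Y=B$ gives $\|AB\|\leqslant\tfrac12\|A^{2}+B^{2}\|$, which does estimate the operator norm but by a quantity that in general exceeds $\tfrac14\|A+B\|^{2}$. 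Thus $\|AB\|$ sits strictly between these two elementary bounds, and neither specialization suffices.

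The estimate that does yield the constant $\tfrac14$ is the Bhatia--Kittaneh singular-value inequality $s_{j}(AB)\leqslant\tfrac14\,s_{j}\big((A+B)^{2}\big)$ for positive $A,B$; its largest-singular-value case, combined with $\|(A+B)^{2}\|=\|A+B\|^{2}$, is exactly the lemma. I would therefore spend the proof on this inequality, and I expect it to be the main obstacle. The point is that one must bound the singular values of $AB$, not its eigenvalues: since $AB$ has the same nonnegative eigenvalues as $A^{1/2}BA^{1/2}\geqslant0$, the eigenvalue side is easy, but passing to singular values for a non-normal operator requires a log-majorization / exterior-power argument that genuinely uses the cross terms of $(A+B)^{2}$ discarded by the symmetric estimates of the previous step. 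As a consistency check the bound is sharp: for $A=B$ both sides equal $\|A\|^{2}$.
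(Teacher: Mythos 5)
The paper itself gives no proof of this lemma---it is quoted from \cite{bh}---so your attempt has to stand on its own as a proof, and it does not. The preparatory material is fine: the normalization by homogeneity, the (correct) observation that the two easy specializations of the arithmetic--geometric mean inequality give only $\rho(AB)\leqslant\frac14\|A+B\|^2$ or $\|AB\|\leqslant\frac12\|A^2+B^2\|$, and the sharpness check at $A=B$. But none of that is proof content. The entire burden is then placed on the inequality $s_j(AB)\leqslant\frac14 s_j\bigl((A+B)^2\bigr)$, which you never prove and explicitly defer as ``the main obstacle.'' Since $s_1(AB)=\|AB\|$ and $s_1\bigl((A+B)^2\bigr)=\|A+B\|^2$, the case $j=1$ of that inequality \emph{is} the lemma verbatim: no reduction has taken place; you have restated the goal and stopped.

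The gesture toward a ``log-majorization / exterior-power argument'' cannot fill this gap, for three concrete reasons. First, it is circular: an exterior-power argument here would consist of applying the operator-norm inequality $\|XY\|\leqslant\frac14\|X+Y\|^2$ to $X=\Lambda^kA$, $Y=\Lambda^kB$, i.e.\ it takes the lemma itself as its base case. Second, even granting the base case, exterior powers control only the products $\prod_{j\leqslant k}s_j(AB)=\|\Lambda^kA\,\Lambda^kB\|$, and because $\Lambda^k(A+B)\neq\Lambda^kA+\Lambda^kB$ the bound produced is $\frac14\|\Lambda^kA+\Lambda^kB\|^2$, which is not the quantity $4^{-k}\prod_{j\leqslant k}s_j\bigl((A+B)^2\bigr)$ required for a genuine log-majorization, and in no event does a statement about products yield bounds on individual singular values. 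Third, the attribution is wrong: \cite{bh} proves the unitarily invariant norm inequality $\|AB\|\leqslant\frac14\|(A+B)^2\|$ (whose operator-norm case is this lemma) by an arithmetic--geometric-mean-type argument, and poses the individual singular-value inequality you invoke as an open question; that strictly stronger statement was settled only much later, by Drury. So your plan routes the lemma through a harder theorem and proves neither. What is missing is precisely an argument for $\|AB\|\leqslant\frac14\|A+B\|^2$ itself; alternatively, if the intent is merely to cite the literature (as the paper does), the correct citation is the norm inequality of \cite{bh}, not its singular-value strengthening.
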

\begin{lemma}\cite{a}\label{l2}
For each  $ A, B  > 0 $  and $ p > 1, $
$$ \Vert A^{p}+ B^{p}\Vert \leqslant\Vert(A+B)^{p}\Vert.  $$
\end{lemma}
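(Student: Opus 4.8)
The plan is to turn the norm inequality into an operator inequality of the form $A^p+B^p\leqslant c^pI$ and then read off the conclusion, using that $\Vert C^p\Vert=\Vert C\Vert^p$ for a positive matrix $C$ in the spectral norm. Accordingly I set $c=\Vert A+B\Vert$, so that $\Vert(A+B)^p\Vert=c^p$; once $A^p+B^p\leqslant c^pI$ is established, taking norms gives $\Vert A^p+B^p\Vert\leqslant c^p=\Vert(A+B)^p\Vert$.

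First I would pin down the spectra of $A$ and $B$. Since $A,B>0$ we have $A\leqslant A+B$ and $B\leqslant A+B$, while positivity of $A+B$ gives $A+B\leqslant cI$. Hence $0\leqslant A\leqslant cI$ and $0\leqslant B\leqslant cI$, so that the eigenvalues of both $A$ and $B$ lie in $[0,c]$.

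The key step is to feed a pointwise scalar estimate into the functional calculus, rather than invoke operator monotonicity of $t\mapsto t^p$, which is false for $p>1$. On $[0,c]$ one has $t^p\leqslant c^{p-1}t$, since this is equivalent to $t^{p-1}\leqslant c^{p-1}$ and $p-1>0$. Applying the functional calculus to $A$ and $B$, whose spectra lie in $[0,c]$, yields $A^p\leqslant c^{p-1}A$ and $B^p\leqslant c^{p-1}B$. Adding these and using $A+B\leqslant cI$,
$$A^p+B^p\leqslant c^{p-1}(A+B)\leqslant c^{p-1}\cdot cI=c^pI,$$
which is exactly the operator inequality sought.

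I expect no serious obstacle here; the one pitfall to avoid is the tempting but invalid appeal to operator monotonicity of the $p$-th power. The genuinely load-bearing observation is that $A\leqslant A+B$ and $B\leqslant A+B$ confine the spectra of $A$ and $B$ to $[0,c]$, which is precisely what makes the elementary bound $t^p\leqslant c^{p-1}t$ applicable to both matrices simultaneously.
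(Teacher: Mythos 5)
Your proof is correct, but there is nothing in the paper itself to compare it against: the paper never proves this lemma, it only quotes it (the citation to \cite{a} is almost certainly a slip --- the inequality is the Ando--Zhan/Bhatia--Kittaneh result, cf.\ \cite{Ando,bh}, where it is established for every unitarily invariant norm). The real comparison is therefore with the cited literature proof, and there your route genuinely diverges. Setting $c=\Vert A+B\Vert$, using $A\leqslant A+B\leqslant cI$ and $B\leqslant A+B\leqslant cI$, feeding the scalar bound $t^{p}\leqslant c^{p-1}t$ on $[0,c]$ into the functional calculus, and concluding $A^{p}+B^{p}\leqslant c^{p-1}(A+B)\leqslant c^{p}I$ is an intrinsically spectral-norm argument: it controls only the largest eigenvalue of $A^{p}+B^{p}$, so it cannot recover the unitarily invariant version, which needs genuinely heavier machinery (Ky Fan dominance and eigenvalue/majorization arguments). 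What your approach buys is elementarity and even a slightly stronger conclusion in operator form, namely $A^{p}+B^{p}\leqslant \Vert A+B\Vert^{p}I$; what it gives up is generality of the norm. For this paper the trade is harmless: in the proof of Theorem \ref{t2} the lemma is used together with the equivalence of $\Vert XY^{-1}\Vert\leqslant c$ with $X^{2}\leqslant c^{2}Y^{2}$ and with the identity $\Vert C^{p}\Vert=\Vert C\Vert^{p}$ for $C\geqslant 0$, both of which are operator-norm facts, so the spectral norm is exactly the norm in play and your argument suffices. You are also right that operator monotonicity of $t\mapsto t^{p}$ is the trap here, and your proof cleanly sidesteps it.
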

%%%%%%%%%%%%%%%%%%%%%%%%%%
Now we are ready to prove our main results which is an extension of \eqref{2.5} for accretive matrices.
\begin{theorem}\label{t2}
Let $A,B\in {\mathcal S}_{\theta}$ such that $0<mI\leqslant \Re A,\Re B\leqslant MI$ and $\Phi$ be a positive unital linear map. If $\sigma$ is an operator mean and $\sigma_1,\sigma_2$  are two operator means between $\sigma$ and $\sigma^*$, then
\begin{align*}
&\Re^p\Phi(A\sigma_1 B)\leqslant\sec^{2p}\theta\ K ^p \Re ^p\Phi(A\sigma_2 B),&(p\leqslant2)\\
&\Re^p\Phi(A\sigma_1 B)\leqslant\sec^{2p}\theta 4^{p-2} K ^p \Re ^p\Phi(A\sigma_2 B).&(p\geq2)
\end{align*}
\end{theorem}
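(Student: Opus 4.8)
The plan is to reduce everything to the single summed inequality supplied by Proposition \ref{pro1} and then to separate the two ranges of $p$ exactly as the two constants $K^p$ and $4^{p-2}K^p$ suggest. Write $C=\Phi\Re(A\sigma_1 B)$ and $D=\Phi\Re(A\sigma_2 B)$. Since a positive linear map is $*$-preserving, $C=\Re\Phi(A\sigma_1 B)$ and $D=\Re\Phi(A\sigma_2 B)$, so the target inequalities read $C^p\leqslant\sec^{2p}\theta\,K^pD^p$ (resp. with the extra factor $4^{p-2}$). By Lemma \ref{2} one has $\Re(A\sigma_i B)\geqslant\Re A\,\sigma_i\,\Re B\geqslant mI$, hence $C,D\geqslant mI$ are positive definite and invertible. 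Proposition \ref{pro1} then gives the key estimate
\[\cos^2\theta\,C+mM\,D^{-1}\leqslant(M+m)I.\]
Finally I would record the elementary equivalence, valid for positive definite $C,D$ and any $\Gamma>0$,
\[C^p\leqslant\Gamma D^p\iff\big\|C^{p/2}D^{-p/2}\big\|\leqslant\sqrt{\Gamma},\]
which follows by congruence with $D^{-p/2}$ together with $\|X^*X\|=\|X\|^2$ applied to $X=C^{p/2}D^{-p/2}$. Thus in each case it suffices to bound the single norm $\|C^{p/2}D^{-p/2}\|$.

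For $p\geqslant2$, set $X=\cos^2\theta\,C$ and $Y=mM\,D^{-1}$, so that the displayed estimate reads $X+Y\leqslant(M+m)I$ and hence $\|X+Y\|\leqslant M+m$. Since $p/2\geqslant1$, Lemma \ref{l2} (an equality when $p=2$) gives $\|X^{p/2}+Y^{p/2}\|\leqslant\|(X+Y)^{p/2}\|\leqslant(M+m)^{p/2}$, and then Lemma \ref{2.1} applied to $X^{p/2},Y^{p/2}\geqslant0$ yields $\|X^{p/2}Y^{p/2}\|\leqslant\frac14(M+m)^p$. Pulling the scalars out of $X^{p/2}Y^{p/2}=(\cos^2\theta)^{p/2}(mM)^{p/2}C^{p/2}D^{-p/2}$ and using $\frac{(M+m)^2}{mM}=4K$ turns this into $\|C^{p/2}D^{-p/2}\|\leqslant\sec^{p}\theta\,2^{p-2}K^{p/2}=\sqrt{\sec^{2p}\theta\,4^{p-2}K^p}$, which is exactly what the equivalence above requires.

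For $0<p\leqslant2$ the same computation specialized to $p=2$ already gives $\|CD^{-1}\|\leqslant\sec^2\theta\,K$, i.e. $C^2\leqslant\sec^4\theta\,K^2D^2$. Applying the L\"owner--Heinz inequality with exponent $p/2\in(0,1]$ to this then produces $C^p=(C^2)^{p/2}\leqslant(\sec^4\theta\,K^2D^2)^{p/2}=\sec^{2p}\theta\,K^pD^p$, which finishes this range.

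The only real obstacle is the bookkeeping of the two scalars $\cos^2\theta$ and $mM$: they must be carried \emph{inside} the $p/2$-th powers so that Lemma \ref{l2} applies directly to the combination $\cos^2\theta\,C+mM\,D^{-1}$ controlled by Proposition \ref{pro1}, and it is precisely the identity $\frac{(M+m)^2}{mM}=4K$ that converts the resulting constant into a power of the Kantorovich constant. The case split is forced because Lemma \ref{l2} requires exponent $>1$; for $p\leqslant2$ this fails, and one instead descends from the $p=2$ inequality by operator monotonicity of $t\mapsto t^{p/2}$, which is also what sharpens the factor $4^{p-2}$ down to $1$ in that range.
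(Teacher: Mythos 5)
Your proof is correct and takes essentially the same route as the paper's: the key inequality of Proposition \ref{pro1}, combined with Lemma \ref{2.1} (and Lemma \ref{l2} when $p\geqslant2$) to bound $\bigl\|C^{p/2}D^{-p/2}\bigr\|$, then the norm--order equivalence to convert back to an operator inequality, with L\"owner--Heinz handling the range $p\leqslant2$ by reduction to $p=2$. The only differences are cosmetic: you make explicit the invertibility of $C,D$ and the equivalence $C^p\leqslant\Gamma D^p\iff\|C^{p/2}D^{-p/2}\|\leqslant\sqrt{\Gamma}$, both of which the paper uses implicitly.
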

\begin{proof}
For the first inequality, by Lowner-Heinz inequality, it is enough to prove the inequality for $p=2$.

Applying  Lemma \ref{2.1} and inequality \eqref{m1},  we have
\begin{align*}
\Vert\cos^2\theta \Phi\Re(A \sigma_1 B) &\Phi^{-1}\Re(A\sigma_2 B) \Vert \\&\leqslant  \frac{1}{4mM} \Vert \cos^2\theta\Phi \Re(A\sigma_1 B)+mM\Phi^{-1}\Re(A\sigma_2B)\Vert^{2}\\
& \leqslant \frac{(M+m)^{2}}{4mM}=K,
\end{align*}
which  is equivalent to
\begin{equation}\label{e5}
\cos^4\theta\Phi^{2} \Re( A \sigma_1 B) \leqslant K^{2}\Phi^{2}\Re (A\sigma_2 B).
\end{equation}

Now, if $ p\geqslant 2, $ by Lemmas \ref{2},  \ref{l2} and inequality (\ref{m1}), respectively
\begin{align*}
\Vert \cos^p\theta&\Phi^{\frac{p}{2}}\Re(A \sigma_1 B) M^{\frac{p}{2}}m^{\frac{p}{2}}\Phi^{-\frac{p}{2}}\Re( A \sigma_2 B) \Vert \\
& \leqslant \frac{1}{4} \left \Vert \cos^p\theta\Phi^{\frac{p}{2}}\Re(A \sigma_1 B)+ M^{\frac{p}{2}}m^{\frac{p}{2}}\Phi^{-\frac{p}{2}}\Re( A \sigma_2 B) \right \Vert^{2}\\
 & \leqslant   \frac{1}{4} \left \Vert\cos^2\theta \Phi\Re(A \sigma_1 B)+ Mm\Phi^{-1}\Re( A \sigma_2 B) \right \Vert^{p} \\
 & \leqslant \frac{1}{4} (M+m)^{p}. \
\end{align*}
Hence,
$$\Vert \cos^p\theta\Phi^{\frac{p}{2}}\Re(A \sigma_1 B) \Phi^{-\frac{p}{2}}\Re( A \sigma_2 B) \Vert \leqslant  \left (\dfrac{(M+m)^{2}}{4^{\frac{2}{p}}Mm}  \right )^{\frac{p}{2}}.$$
This means that
$$\cos^{2p}\theta \Phi^{p}\Re(A \sigma_1 B) \leqslant  \left( \dfrac{(M+m)^{2}}{4^{\frac{2}{p}}Mm}  \right)^{p}\Phi^{p}\Re( A \sigma_2 B).$$
\end{proof}
This result was proved in \cite{bed1} for $p=2$ with coefficient $\sec^{12}\theta$ and we find a sharper inequality with coefficient $\sec^4\theta$.

Furthermore, if in Theorem \ref{t2}, we put $p=2$, $\sigma_1=\nabla_v$ and $\sigma_2=\sharp_v$, we get a refinement of some results in \cite{nas2} and \cite{Yan2}.
%%%%%%%%%%%%%%%%%%%%%%%%%%

Going back to the proof of Proposition \ref{pro1}, if we put $\sigma_1=\nabla_v$, then $\Re A\sigma_1\Re B=\Re(A\sigma_1 B)$. Thus the coefficient $\cos\theta$ does not appear. Therefore, we have the following result.
\begin{theorem}\label{t2'}
Let $A,B\in {\mathcal S}_{\theta}$ such that $0<mI\leqslant \Re A,\Re B\leqslant MI$ and $\Phi$ be a positive unital linear map. If  $!_v\leqslant\sigma\leqslant\nabla_v$, then
\begin{align*}
&\Re^p\Phi(A\nabla_v B)\leqslant \alpha ^p \Re ^p\Phi(A\sigma B),
\end{align*}
where $\alpha=\max\{K,4^{1-\frac{2}{p}}K\}$.
\end{theorem}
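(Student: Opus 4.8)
The plan is to observe that the arithmetic mean is the one case in which the real part passes through the operator mean \emph{exactly}, and to use this to remove the sectorial factor $\sec^{2}\theta$ from one side of the estimate. Indeed, for $\sigma_1=\nabla_v$ one has
\[
\Re(A\nabla_v B)=\Re\big(vA+(1-v)B\big)=v\Re A+(1-v)\Re B=\Re A\,\nabla_v\,\Re B,
\]
so the lower bound $\Re A\,\nabla_v\,\Re B\leqslant\Re(A\nabla_v B)$ of Lemma \ref{2} is in fact an equality and no $\cos^{2}\theta$ is lost. I would therefore first re-run the proof of Proposition \ref{pro1} with $\sigma_1=\nabla_v$ and $\sigma_2=\sigma$, the latter lying between $!_v=\nabla_v^{*}$ and $\nabla_v$, to obtain the sharpened companion inequality
\[
\Phi\Re(A\nabla_v B)+mM\,\Phi^{-1}\Re(A\sigma B)\leqslant(M+m)I,
\]
in which the coefficient $\cos^{2}\theta$ appearing in Proposition \ref{pro1} is now absent.

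For this companion inequality the steps mirror those of Proposition \ref{pro1}. From $0<mI\leqslant\Re A,\Re B\leqslant MI$ one derives $\Re A+Mm\,\Re^{-1}A\leqslant(M+m)I$ and its analogue for $B$. Since $\sigma\geqslant\nabla_v^{*}=\,!_v$ and, by Lemma \ref{2}, $\Re A\,\sigma\,\Re B\leqslant\Re(A\sigma B)$, monotonicity gives $\Re A\,!_v\,\Re B\leqslant\Re(A\sigma B)$, whence $\Phi^{-1}\Re(A\sigma B)\leqslant\Phi^{-1}(\Re A\,!_v\,\Re B)$; Choi's inequality (Lemma \ref{4}) then upgrades this to $\Phi^{-1}(\Re A\,!_v\,\Re B)\leqslant\Phi\big((\Re A\,!_v\,\Re B)^{-1}\big)=\Phi(\Re^{-1}A\,\nabla_v\,\Re^{-1}B)$. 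Combining with the exact identity for $\nabla_v$ and using the linearity and monotonicity of the arithmetic mean together with the unitality of $\Phi$,
\[
\Phi\Re(A\nabla_v B)+mM\,\Phi^{-1}\Re(A\sigma B)\leqslant\Phi\big((\Re A+Mm\Re^{-1}A)\,\nabla_v\,(\Re B+Mm\Re^{-1}B)\big)\leqslant(M+m)I.
\]

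With the companion inequality in hand I would reproduce the argument of Theorem \ref{t2}, but now without the $\cos$-factors. For $p\leqslant2$, the Lowner--Heinz inequality reduces matters to $p=2$; applying Lemma \ref{2.1} to $X=\Phi\Re(A\nabla_v B)$ and $Y=mM\,\Phi^{-1}\Re(A\sigma B)$ and then the companion inequality yields $\|\Phi\Re(A\nabla_v B)\,\Phi^{-1}\Re(A\sigma B)\|\leqslant K$, which, by the standard equivalence $\|XY^{-1}\|\leqslant c\iff X^{2}\leqslant c^{2}Y^{2}$ for positive $X,Y$, gives $\Re^{2}\Phi(A\nabla_v B)\leqslant K^{2}\Re^{2}\Phi(A\sigma B)$. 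For $p\geqslant2$, Lemma \ref{2.1} followed by Lemma \ref{l2} with exponent $p/2$ and then the companion inequality produce $\|\Phi^{p/2}\Re(A\nabla_v B)\,\Phi^{-p/2}\Re(A\sigma B)\|\leqslant\big((M+m)^{2}/(4^{2/p}Mm)\big)^{p/2}$, and the same equivalence converts this into $\Re^{p}\Phi(A\nabla_v B)\leqslant\big((M+m)^{2}/(4^{2/p}Mm)\big)^{p}\Re^{p}\Phi(A\sigma B)$.

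Finally I would check that the two constants assemble into $\alpha^{p}$: for $p\leqslant2$ the factor is $K^{p}$ and $4^{1-2/p}\leqslant1$, while for $p\geqslant2$ one has $\big((M+m)^{2}/(4^{2/p}Mm)\big)^{p}=4^{\,p-2}K^{p}=(4^{1-2/p}K)^{p}$ with $4^{1-2/p}\geqslant1$; in both regimes the coefficient equals $\max\{K,4^{1-2/p}K\}^{p}=\alpha^{p}$. The proof is thus a bookkeeping refinement of Theorem \ref{t2}, and the only genuine point requiring care is the observation that $\Re$ commutes with $\nabla_v$ exactly; this is precisely what eliminates $\cos^{2}\theta$ from the surviving side, and hence from the final estimate. (I use throughout that $\Re\Phi=\Phi\Re$ for a positive linear map, since $\Phi$ preserves adjoints.)
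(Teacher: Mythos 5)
Your proof is correct and takes essentially the same route as the paper: the paper obtains Theorem \ref{t2'} precisely by noting that $\Re(A\nabla_v B)=\Re A\,\nabla_v\,\Re B$ holds exactly, so the $\cos^2\theta$ factor disappears when Proposition \ref{pro1} is re-run with $\sigma_1=\nabla_v$, after which the argument of Theorem \ref{t2} applies unchanged. Your version merely spells out the details (the companion inequality, the norm-to-order equivalence, and the identification $\Re\Phi=\Phi\Re$) that the paper leaves implicit.
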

%%%%%%%%%%%%%%%%%%%%%%%%%%%%
In addition, \eqref{1}, helps us to prove another version of Theorem \ref{t2}.

 \begin{theorem}\label{t5}
Let $A,B\in {\mathcal S}_{\theta}$ such that $0<mI\leqslant \Re A^{-1},\Re B^{-1}\leqslant MI$ and $\sigma$ be an arbitrary mean. Then for every positive unital linear map $\Phi$ and $\sigma^*\leqslant\sigma_1,\sigma_2\leqslant\sigma$,
\begin{equation*}
\Phi^p\Re( A\sigma_1 B)\leqslant \alpha^p\sec^{4p}\theta \Phi^p\Re( A\sigma_2 B),
\end{equation*}
where $\alpha=\max\{K,4^{1-\frac{2}{p}}K\}$.
\end{theorem}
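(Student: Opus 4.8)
The plan is to reduce Theorem \ref{t5} to a single application of Theorem \ref{t2} to the pair $A,B$ itself; the only real work is to convert the hypothesis on $\Re A^{-1},\Re B^{-1}$ into spectral bounds on $\Re A,\Re B$ via \eqref{1}, and then to track how the associated Kantorovich constant changes.

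First I would invoke inequality \eqref{1}. Since $A\in\mathcal{S}_\theta$, we have $\Re(A^{-1})\leqslant\Re^{-1}(A)\leqslant\sec^2\theta\,\Re(A^{-1})$. The left inequality combined with $mI\leqslant\Re(A^{-1})$ gives $\Re^{-1}(A)\geqslant mI$, hence $\Re A\leqslant\frac1m I$; the right inequality combined with $\Re(A^{-1})\leqslant MI$ gives $\Re^{-1}(A)\leqslant\sec^2\theta\,MI$, hence $\Re A\geqslant\frac{\cos^2\theta}{M}I$. The same applies to $B$. Writing $m'=\frac{\cos^2\theta}{M}$ and $M'=\frac1m$, I obtain $0<m'I\leqslant\Re A,\Re B\leqslant M'I$, which is exactly the hypothesis of Theorem \ref{t2} for the pair $A,B$ (recall $A,B\in\mathcal{S}_\theta$ is given).

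Applying Theorem \ref{t2} to $A,B$ with the bounds $m',M'$ then yields
\[
\Re^p\Phi(A\sigma_1 B)\leqslant\sec^{2p}\theta\,(\alpha')^p\,\Re^p\Phi(A\sigma_2 B),
\]
with $\alpha'=\max\{K',4^{1-\frac2p}K'\}$ and $K'=\frac{(M'+m')^2}{4m'M'}$ (here $\Re^p\Phi=\Phi^p\Re$, since a positive unital linear map commutes with $\Re$). The heart of the argument is the elementary computation $K'\leqslant\sec^2\theta\,K$: substituting $m',M'$ gives $K'=\frac{(M+m\cos^2\theta)^2}{4mM\cos^2\theta}$ while $\sec^2\theta\,K=\frac{(M+m)^2}{4mM\cos^2\theta}$, so the claim reduces to $(M+m\cos^2\theta)^2\leqslant(M+m)^2$, which holds because $\cos^2\theta\leqslant1$. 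Consequently $\alpha'\leqslant\sec^2\theta\,\alpha$, and feeding this into the displayed inequality turns $\sec^{2p}\theta\,(\alpha')^p$ into $\sec^{2p}\theta\cdot\sec^{2p}\theta\,\alpha^p=\sec^{4p}\theta\,\alpha^p$, which is precisely the assertion.

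I expect the only delicate points to be orienting the two halves of \eqref{1} correctly, so that one genuinely recovers a lower bound of order $\cos^2\theta/M$ and an upper bound of order $1/m$ on the real parts rather than the reverse, and checking that the new constants remain admissible, i.e.\ $m'\leqslant M'$, equivalently $m\cos^2\theta\leqslant M$, which is immediate from $m\leqslant M$. Everything else is routine bookkeeping of $K'$ and of the uniform-in-$p$ definition of $\alpha$.
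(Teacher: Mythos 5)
Your proof is correct, and it takes a genuinely different route from the paper's. The paper does not reduce to Theorem \ref{t2} as a black box; instead it proves a new additive inequality, $Mm\Phi\Re(A\sigma_1 B)+\Phi^{-1}\Re(A\sigma_2 B)\leqslant\sec^2\theta\,(M+m)I$ (its inequality \eqref{e01}), by combining the subadditivity argument of Proposition \ref{pro1} with \eqref{1}, Lemma \ref{2} and Choi's inequality, and then re-runs the Bhatia--Kittaneh norm machinery (Lemmas \ref{2.1} and \ref{l2}) exactly as in the proof of Theorem \ref{t2}. You instead use the two halves of \eqref{1}, correctly oriented, to convert the hypothesis $mI\leqslant\Re A^{-1},\Re B^{-1}\leqslant MI$ into $\frac{\cos^2\theta}{M}I\leqslant\Re A,\Re B\leqslant\frac{1}{m}I$, invoke Theorem \ref{t2} with the new bounds $m'=\cos^2\theta/M$, $M'=1/m$, and absorb the skewed Kantorovich constant via $K'=\frac{(M+m\cos^2\theta)^2}{4mM\cos^2\theta}\leqslant\frac{(M+m)^2}{4mM\cos^2\theta}=\sec^2\theta\,K$, hence $\alpha'\leqslant\sec^2\theta\,\alpha$ and $\sec^{2p}\theta\,(\alpha')^p\leqslant\sec^{4p}\theta\,\alpha^p$. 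Every step checks out: positive linear maps are adjoint-preserving, so $\Re\Phi=\Phi\Re$ and the notations $\Re^p\Phi$ and $\Phi^p\Re$ agree (an identification the paper itself uses silently between the statements of Theorems \ref{t2} and \ref{t5}); the matrix inversions reverse order as you use them; and the restatement of Theorem \ref{t2} with the single constant $\sec^{2p}\theta\,\alpha^p$ is legitimate since $(4^{1-\frac{2}{p}}K)^p=4^{p-2}K^p$. Your reduction is shorter and more modular, and it in fact establishes the sharper intermediate bound $\Phi^p\Re(A\sigma_1 B)\leqslant\sec^{2p}\theta\,(\alpha')^p\,\Phi^p\Re(A\sigma_2 B)$, which is strictly better than the stated constant whenever $\theta>0$, since $M+m\cos^2\theta<M+m$. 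What the paper's direct argument buys is self-containedness: it does not depend on the precise numerical form of the constant in Theorem \ref{t2}, and it produces the additive inequality \eqref{e01}, which is of independent interest and is the natural sectorial analogue of \eqref{m1}.
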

\begin{proof}
It is enough to prove
\begin{equation}\label{e01}
Mm\Phi\Re (A\sigma_1 B)+\Phi^{-1}\Re(A\sigma_2 B)\leqslant\sec^2\theta\ (M+m)I.\end{equation}
The rest of proof is similar to Theorem \ref{t2}.

As in the proof of Proposition \ref{pro1},
$$\Re A^{-1}\sigma\Re B^{-1}+mM(\Re^{-1} A^{-1}\sigma\Re^{-1} B^{-1})\leqslant (M+m)I.$$
From this inequality and   \eqref{1}, it follows that
$$\Re A^{-1}\sigma\Re B^{-1}+Mm(\Re A\sigma\Re B)\leqslant (M+m)I.$$
Lemma \ref{2} leads to
\begin{equation}\label{e02}\Re (A\sigma_1 B)\leqslant\sec^2\theta (\Re A\sigma_1 \Re B)\leqslant\sec^2\theta (\Re A\sigma\Re B). \end{equation}
In addition,
\begin{align}\label{e03}
\notag \Phi^{-1}\Re(A\sigma_2 B)&\leqslant \Phi^{-1}(\Re A\sigma_2 \Re B)\leqslant \Phi^{-1}(\Re A\sigma^* \Re B)\\\notag
&\leqslant \Phi(\Re A\sigma^* \Re B)^{-1}=\Phi(\Re^{-1} A\sigma \Re^{-1} B)\\
&\leqslant \sec^2\theta\Phi(\Re A^{-1}\sigma \Re B^{-1} ).
\end{align}
Combining \eqref{e02} and \eqref{e03}, we get \eqref{e01}.
\end{proof}
%%%%%%%%%%%%%%%%%%%%%%%%%
\begin{remark}
If we substitute $A$ , $B$, $\sigma$ with $A^{-1}$, $B^{-1}$ and $\sigma^*$ in this theorem, we get
$$\Phi^p\Re( A\sigma_1 B)^{-1}\leqslant \alpha^p\sec^{4p}\theta \Phi^p\Re( A\sigma_2 B)^{-1},$$
where  $0<mI\leqslant \Re A,\Re B\leqslant MI$.
\end{remark}
%%%%%%%%%%%%%%%%%%%%%%%%%%%%%%%%%%%%%%
\begin{lemma}\label{9}\cite{bed1}
Let $f\in \mathbf{m}$ and $ A,B\in {\mathcal S_{\theta}}$ for some $0 \leqslant \theta <\frac{\pi}{2}$. Then
\begin{equation*}
f(\Re A)\leqslant\Re(f(A))\leqslant\sec^2\theta f(\Re A).
\end{equation*}
\end{lemma}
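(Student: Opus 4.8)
The plan is to obtain this statement as the one-variable specialization of Lemma~\ref{2}, namely by taking one argument of the mean to be the identity. (Note that $B$ does not actually occur in the conclusion, so only $A$ is relevant.) Let $\sigma$ be the operator mean whose representing function is $f\in\mathbf m$. The starting observation is that $I\in\mathcal S_0\subseteq\mathcal S_\theta$ for every admissible $\theta$, since its numerical range is $\{1\}$; hence the pair $(I,A)$ is eligible for Lemma~\ref{2}. Moreover, since the Kubo--Ando definition $X\sigma Y=X^{1/2}f(X^{-1/2}YX^{-1/2})X^{1/2}$ extends to accretive matrices (as recalled from \cite{bed1}), setting $X=I$ collapses it to $I\sigma Y=f(Y)$. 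In particular $I\sigma A=f(A)$, and because $\Re A$ is positive definite, $I\sigma\Re A=f(\Re A)$ in the ordinary functional calculus.

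With these identifications in hand I would simply apply Lemma~\ref{2} to the pair $(I,A)$. Using $\Re I=I$, the lower bound $\Re I\,\sigma\,\Re A\leqslant\Re(I\sigma A)$ becomes $f(\Re A)\leqslant\Re(f(A))$, while the upper bound $\Re(I\sigma A)\leqslant\sec^2\theta\,(\Re I\,\sigma\,\Re A)$ becomes $\Re(f(A))\leqslant\sec^2\theta\,f(\Re A)$. Concatenating the two yields exactly the asserted chain
\[
f(\Re A)\leqslant\Re(f(A))\leqslant\sec^2\theta\,f(\Re A),
\]
and nothing further needs to be estimated; in particular the sharp constant $\sec^2\theta$ is inherited verbatim from Lemma~\ref{2}.

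The only point deserving care — and what I regard as the main (though minor) obstacle — is justifying the identity $I\sigma A=f(A)$ for an accretive rather than positive matrix $A$. Two structural facts must be granted from the accretive Kubo--Ando calculus of \cite{bed1}: first, that every $f\in\mathbf m$ extends holomorphically to the open right half-plane, which contains $\sigma(A)$ for $A\in\mathcal S_\theta$ (true because operator monotone functions on $(0,\infty)$ are restrictions of Pick functions), so that $f(A)$ is well defined via the holomorphic functional calculus; and second, that the extended mean genuinely reduces to this $f(A)$ when the left argument is $I$, and likewise $f(\Re A)$ agrees with the positive-definite functional calculus. Once these are accepted, the entire content of the lemma is transported from Lemma~\ref{2} with a single substitution, so I would present the argument as a short corollary-style deduction rather than a standalone computation.
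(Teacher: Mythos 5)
The paper gives no proof of this lemma---it is quoted verbatim from \cite{bed1}---but your derivation is correct and is essentially how the inequality arises in that source: in the accretive Kubo--Ando calculus of \cite{bed1}, $f(A)$ for accretive $A$ is \emph{defined} as $I\sigma_f A$, i.e.\ through the integral representation $f(A)=\int_0^1\bigl((1-t)I+tA^{-1}\bigr)^{-1}\,d\mu_f(t)$, so the identity $I\sigma A=f(A)$ that you single out as the main obstacle holds by definition there, and your holomorphic-functional-calculus justification is a valid but unnecessary consistency check. With $\Re I=I$ and $I\in\mathcal S_\theta$, applying Lemma~\ref{2} to the pair $(I,A)$ then yields both bounds with the constant $\sec^2\theta$, exactly as you say.
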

 \begin{lemma}\label{2.10}\cite[Proposition 2.1]{Mal}
If $\sigma_1$ and $\sigma_2$ are two means with $\sigma_1\leqslant\sigma_2$ and $A, B\in\mathcal S_{\theta}$, then
\begin{align*}
&(a)\quad\Re (A\sigma_1 B)\leqslant \sec^2\theta\ \Re (A\sigma_2 B)\\
&(b)\quad\Re(A\sigma_2 B)^{-1}\leqslant \sec^2\theta\ \Re (A\sigma_1 B)^{-1}.
\end{align*}
\end{lemma}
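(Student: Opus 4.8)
The plan is to reduce both statements to the two–sided sectorial estimate of Lemma \ref{2}, invoking only the monotonicity of an operator mean in its mean–parameter and the order–reversing property of matrix inversion. The starting observation is that, since $A,B\in\mathcal S_\theta$, the real parts $\Re A$ and $\Re B$ are positive definite; hence for these positive definite matrices the relation $\sigma_1\leqslant\sigma_2$ gives, directly from the definition of the order on means, $\Re A\,\sigma_1\,\Re B\leqslant\Re A\,\sigma_2\,\Re B$.

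For part $(a)$ I would chain three inequalities. First, apply the upper half of Lemma \ref{2} to the mean $\sigma_1$ to get $\Re(A\sigma_1 B)\leqslant\sec^2\theta\,(\Re A\,\sigma_1\,\Re B)$. Then insert the mean–monotonicity step $\Re A\,\sigma_1\,\Re B\leqslant\Re A\,\sigma_2\,\Re B$ noted above. Finally, apply the lower half of Lemma \ref{2} to the mean $\sigma_2$, namely $\Re A\,\sigma_2\,\Re B\leqslant\Re(A\sigma_2 B)$. Concatenating the three steps gives $\Re(A\sigma_1 B)\leqslant\sec^2\theta\,\Re(A\sigma_2 B)$, which is $(a)$.

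For part $(b)$ there are two equally short routes, according to how one reads $\Re(\,\cdot\,)^{-1}$. If it denotes $(\Re(\,\cdot\,))^{-1}$, then since $\Re(A\sigma_1 B)$ and $\Re(A\sigma_2 B)$ are positive definite I would simply invert the inequality from $(a)$: the map $X\mapsto X^{-1}$ is order–reversing on positive definite matrices, so $\Re(A\sigma_1 B)\leqslant\sec^2\theta\,\Re(A\sigma_2 B)$ yields $(\Re(A\sigma_2 B))^{-1}\leqslant\sec^2\theta\,(\Re(A\sigma_1 B))^{-1}$. If instead it denotes $\Re((\,\cdot\,)^{-1})$, I would use the duality $(A\sigma_i B)^{-1}=A^{-1}\sigma_i^* B^{-1}$ together with $A^{-1},B^{-1}\in\mathcal S_\theta$ and the fact that $\sigma_1\leqslant\sigma_2$ forces $\sigma_2^*\leqslant\sigma_1^*$; then part $(a)$, applied to the matrices $A^{-1},B^{-1}$ and to the ordered pair $\sigma_2^*\leqslant\sigma_1^*$, produces exactly $\Re((A\sigma_2 B)^{-1})\leqslant\sec^2\theta\,\Re((A\sigma_1 B)^{-1})$.

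I do not expect a genuine obstacle; the only delicate points are directional. One must apply the upper half of Lemma \ref{2} to $\sigma_1$ and the lower half to $\sigma_2$ (never the reverse), and in $(b)$ one must track the order reversal under inversion, which is precisely what turns $\sigma_1\leqslant\sigma_2$ into $\sigma_2^*\leqslant\sigma_1^*$. No constant worse than $\sec^2\theta$ can slip in, because along either route the two–sided bound of Lemma \ref{2} is used exactly once on each side and every duality or inversion step is an exact identity.
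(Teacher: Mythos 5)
You should first know that the paper contains no proof of this lemma: it is quoted verbatim from \cite[Proposition 2.1]{Mal} (a different paper by the same authors, cited as ``in print''), so there is no internal argument to compare yours against; I can only judge your proposal on its own merits and against the toolkit this paper uses. On that basis your proof is correct. Part (a) is the natural argument: the two-sided estimate of Lemma \ref{2} applied once on each side of the mean-monotonicity step $\Re A\,\sigma_1\,\Re B\leqslant\Re A\,\sigma_2\,\Re B$, with the directional bookkeeping (upper half for $\sigma_1$, lower half for $\sigma_2$) done correctly, yielding exactly the constant $\sec^2\theta$. For part (b), you are right to flag the notational ambiguity; the paper's own usage elsewhere (the remark following Theorem \ref{t5}, where $\Phi^p\Re(A\sigma_1 B)^{-1}$ is obtained by substituting $A^{-1}$, $B^{-1}$, $\sigma^*$ and invoking $(A\sigma B)^{-1}=A^{-1}\sigma^*B^{-1}$) indicates that the intended reading is $\Re\bigl((A\sigma_2 B)^{-1}\bigr)$, and your second route handles precisely this, using $A^{-1},B^{-1}\in\mathcal S_{\theta}$, the reversal $\sigma_1\leqslant\sigma_2\Rightarrow\sigma_2^*\leqslant\sigma_1^*$ (correct, since $f^*(t)=1/f(1/t)$), and an application of part (a). The one ingredient you should make explicit is that the duality $(A\sigma B)^{-1}=A^{-1}\sigma^*B^{-1}$, a Kubo--Ando identity for positive matrices, must be known to persist for accretive matrices under the extension of \cite{bed1}; it does (it follows algebraically from the functional-calculus form of the extended mean, and this paper itself relies on it for sector matrices), but it goes beyond the two facts you announce as your only tools, so it deserves a citation rather than silent use. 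Incidentally, your duality route is worth keeping even if one is unsure of the intended reading: the alternative of passing through inequality \eqref{1} twice would inflate the constant to $\sec^4\theta$, whereas duality preserves the sharp $\sec^2\theta$.
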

%%%%%%%%%%%%%%%%%%%%%%%%%%%%

The following theorem is an analogue of Lemma \ref{2.10} involving operator monotone functions.
\begin{theorem}\label{t02}
Let $A,B\in {\mathcal S }_{\theta}$ such that $0<mI\leqslant \Re A,\Re B\leqslant MI$, $\sigma$ be an arbitrary mean, $\Phi$ be a positive unital linear map and $f\in \mathbf{m}$. Then
\begin{equation*}
\Re f\left(\Phi(A\sigma_1 B)\right)\leqslant \sec^{2}\theta\ \Re f\left(K\sec^{2}\theta  \ \Phi(A\sigma_2 B)\right)\leqslant K\sec^{4}\theta\ \Re f\left(\Phi(A\sigma_2 B)\right)
\end{equation*}
for every operator means $\sigma_1,\sigma_2$ between $\sigma$ and $\sigma^*$.

Moreover, if $\sigma_1\leqslant \sigma_2$, the constant $K$ can be omitted.
\end{theorem}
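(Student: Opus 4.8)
The plan is to establish the two inequalities of the chain separately: the first moves $\sigma_1$ to $\sigma_2$ and pushes the Kantorovich and sector factors \emph{into} the argument of $f$, while the second pulls the scalar $K\sec^2\theta$ back \emph{out} of $f$. Write $T=\Phi(A\sigma_2 B)$. I will use throughout that a positive linear map is $*$-preserving, so $\Re\Phi(X)=\Phi(\Re X)$, and that positive linear maps send $\mathcal S_\theta$ into $\mathcal S_\theta$ (because $A\in\mathcal S_\theta$ is equivalent to positivity of the Hermitian matrices $\Im(e^{i\theta}A)$ and $-\Im(e^{-i\theta}A)$, a property preserved by $\Phi$). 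Together with Lemma~\ref{2}, this guarantees that $\Phi(A\sigma_1 B)$, $T$, and $K\sec^2\theta\,T$ all lie in $\mathcal S_\theta$, so Lemmas~\ref{2} and~\ref{9} are available at every stage.

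For the first inequality I would begin on the left with the upper bound of Lemma~\ref{9} applied to $\Phi(A\sigma_1 B)$, obtaining $\Re f(\Phi(A\sigma_1 B))\leqslant\sec^2\theta\,f(\Phi\Re(A\sigma_1 B))$. The engine is the positive-matrix estimate
\begin{equation*}
\Re(A\sigma_1 B)\leqslant\sec^2\theta\,(\Re A\,\sigma\,\Re B)\leqslant K\sec^2\theta\,(\Re A\,\sigma^*\Re B)\leqslant K\sec^2\theta\,\Re(A\sigma_2 B),
\end{equation*}
where the outer inequalities use both halves of Lemma~\ref{2} together with $\sigma^*\leqslant\sigma_1,\sigma_2\leqslant\sigma$, and the middle one is \eqref{2.5} with $p=1$ and $\Phi=\mathrm{id}$ (for which $\alpha=K$) applied to the positive matrices $\Re A,\Re B$ satisfying $mI\leqslant\Re A,\Re B\leqslant MI$. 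Applying $\Phi$, then the operator monotonicity of $f$, and finally the lower bound of Lemma~\ref{9} to $Y=K\sec^2\theta\,T\in\mathcal S_\theta$ (so that $f(\Re Y)\leqslant\Re f(Y)$) returns me to a real part of $f$ and produces exactly $\Re f(\Phi(A\sigma_1 B))\leqslant\sec^2\theta\,\Re f(K\sec^2\theta\,T)$.

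For the second inequality, cancelling the common factor $\sec^2\theta$ reduces the claim to the operator inequality
\begin{equation*}
\Re f(cT)\leqslant c\,\Re f(T),\qquad c:=K\sec^2\theta\geqslant1.
\end{equation*}
The scalar input is the concavity bound $f(ct)\leqslant c\,f(t)$ for $t>0$, $c\geqslant1$, valid since every $f\in\mathbf m$ is concave with $f(0^+)\geqslant0$. For a normal $T$ this lifts at once through the spectral theorem, and for positive $T$ it is the familiar $f(cT)\leqslant c\,f(T)$; the issue is to lift it to the real part of a genuinely non-normal accretive $T$.

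That lifting is the step I expect to fight hardest. The real part does not commute with the holomorphic functional calculus, so $\Re f(cT)\leqslant c\,\Re f(T)$ cannot be read off spectrally. A direct use of Lemma~\ref{9} (upper bound on $f(cT)$, scalar concavity $f(c\Re T)\leqslant c\,f(\Re T)$, lower bound back to $\Re f(T)$) only yields $\Re f(cT)\leqslant c\sec^2\theta\,\Re f(T)$, which carries an extra $\sec^2\theta$ and overshoots the stated constant; so to reach the sharp $c\,\Re f(T)$ I would instead pass to the L\"owner representation $f(t)=\alpha+\beta t+\int_0^\infty\frac{(1+\lambda)t}{t+\lambda}\,d\mu(\lambda)$ and study
\begin{equation*}
c\,f(T)-f(cT)=(c-1)\alpha I+(c-1)c\int_0^\infty(1+\lambda)\,T^2(T+\lambda I)^{-1}(cT+\lambda I)^{-1}\,d\mu(\lambda),
\end{equation*}
showing that the real part of the integrand stays positive under the sector constraint $T\in\mathcal S_\theta$. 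Tracking how the aperture $\theta$, the Kantorovich factor hidden in $c$, and the integration variable $\lambda$ balance each other is where the genuine work lies, and it is the estimate I would check most carefully. Once it is in hand, the final sentence (omission of $K$ when $\sigma_1\leqslant\sigma_2$) follows from the same scheme with the monotone step $A\sigma_1 B\leqslant A\sigma_2 B$ replacing the Kantorovich inequality, so that $K$ never enters.
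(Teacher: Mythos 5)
Your handling of the first inequality is correct and is essentially the paper's own argument: the paper quotes Theorem~\ref{t2} with $p=1$ where you re-derive that estimate from Lemma~\ref{2} and \eqref{2.5}, an immaterial difference. The genuine gap is exactly where you predicted you would ``fight hardest'': you never prove $\Re f(cT)\leqslant c\,\Re f(T)$ for $c=K\sec^2\theta$ and sectorial $T$, you only outline the L\"owner-representation program for it. That program cannot be completed, because the target inequality is \emph{false} once $\theta>\pi/4$, already for $1\times 1$ matrices. Take $f(t)=2t/(t+1)\in\mathbf m$ and $T=(re^{i\phi})$ with $\pi/4<\phi\leqslant\theta$; expanding in $r$ gives
\[
c\,\Re f(T)-\Re f(cT)=2c(c-1)r^{2}\cos 2\phi+O(r^{3})<0
\]
for small $r$, since $\cos2\phi<0$ (numerically: $\phi=1.2$, $r=0.1$, $c=\sec^{2}\phi\approx7.62$ gives $\Re f(cT)\approx0.803$ while $c\,\Re f(T)\approx0.651$). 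Plugging $A=B=T$, $\sigma_1=\sigma_2=\sigma=\nabla_{1/2}$, $\Phi=\mathrm{id}$, $m=M=r\cos\phi$ (so $K=1$) into the statement shows the theorem's \emph{second} inequality itself fails for $\theta>\pi/4$; so no estimate on the real part of your integrand can close the gap. Even in the range $\theta\leqslant\pi/4$, where the scalar obstruction vanishes, the matrix positivity you would need is doubtful: $X\in\mathcal S_{\pi/4}$ does not even imply $\Re(X^{2})\geqslant0$ (take $X=H+iS$ with $H=\left(\begin{smallmatrix}2&1\\1&1\end{smallmatrix}\right)$, $S=\left(\begin{smallmatrix}1&1\\1&1\end{smallmatrix}\right)$).

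You should also know that the paper's own proof is no better at this point: it disposes of the second inequality with the single sentence that ``it follows from $f(\lambda x)\leqslant\lambda f(x)$ for $\lambda\geqslant1$,'' i.e.\ it applies the scalar concavity fact inside $\Re f(\cdot)$ of a non-normal matrix --- precisely the unjustified lifting you refused to make, and which the counterexample above invalidates. What survives is the outer inequality, and it is obtained by applying the scalar-multiple inequality to the \emph{positive} matrix $\Re\Phi(A\sigma_2B)$ while it is still inside $f$, using Lemma~\ref{9} only at the two ends:
\[
\Re f\bigl(\Phi(A\sigma_1B)\bigr)\leqslant\sec^{2}\theta\,f\bigl(K\sec^{2}\theta\,\Re\Phi(A\sigma_2B)\bigr)\leqslant K\sec^{4}\theta\,f\bigl(\Re\Phi(A\sigma_2B)\bigr)\leqslant K\sec^{4}\theta\,\Re f\bigl(\Phi(A\sigma_2B)\bigr),
\]
which gives $\Re f(\Phi(A\sigma_1 B))\leqslant K\sec^{4}\theta\,\Re f(\Phi(A\sigma_2 B))$ for all $\theta$, bypassing the false middle term. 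Your fallback route (the version with the extra $\sec^{2}\theta$) is correct as an inequality but chains through the middle term and would only yield $K\sec^{6}\theta$. The same criticism and the same repair apply to the ``moreover'' clause with $K$ omitted.
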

\begin{proof} From Lemma  \ref{9} and Theorem \ref{t2}, we have
\begin{align*}
\cos^{2}\theta \Re f\left(\Phi(A\sigma_1 B)\right)&\leqslant f\left(\Re\Phi(A\sigma_1 B)\right)\\
&\leqslant f\left(K\sec^{2}\theta\  \Re \Phi(A\sigma_2 B)\right)\\
&\leqslant \Re f\left(K\sec^{2}\theta  \Phi(A\sigma_2 B)\right).
\end{align*}
This prove the first inequality.

The second inequality follows from the fact that if $\lambda\geq1$ then $f(\lambda x)\leqslant \lambda f(x)$.

If $\sigma_1\leqslant\sigma_2$, we can use Lemma \ref{2.10} instead of Theorem \ref{t2} and avoid the constant $K$.
\end{proof}
  %%%%%%%%%%%%%%%%%%%%%%%%
Similarly using Theorem \ref{t2'}, we have this Theorem.
\begin{theorem}
Let $A,B\in {\mathcal S }_{\theta}$ such that $0<mI\leqslant \Re A,\Re B\leqslant MI$,  $\Phi$ be a positive unital linear map and $f\in \mathbf{m}$. Then
\begin{equation*}
\Re f\left(\Phi(A\nabla_v B)\right)\leqslant \sec^{2}\theta\ \Re f\left(K \Phi(A\sigma B)\right)\leqslant K\sec^{2}\theta\ \Re f\left(\Phi(A\sigma B)\right)
\end{equation*}
where $!_v\leqslant \sigma\leqslant\nabla_v$.
\end{theorem}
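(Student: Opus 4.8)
The plan is to transcribe the proof of Theorem~\ref{t02} essentially line for line, replacing the one invocation of Theorem~\ref{t2} by the sharper Theorem~\ref{t2'}. The payoff of this substitution is exactly the point flagged in the remark preceding the statement: because $\Re A\nabla_v\Re B=\Re(A\nabla_v B)$ holds with equality, Theorem~\ref{t2'} carries \emph{no} factor of $\sec^2\theta$ inside its bound, so one power of $\sec^2\theta$ that appears inside $f$ in Theorem~\ref{t02} disappears here, leaving the argument $K\Phi(A\sigma B)$ rather than $K\sec^2\theta\,\Phi(A\sigma B)$.

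First I would set up the sectorial bookkeeping. By Lemma~\ref{2}, both $A\nabla_v B$ and $A\sigma B$ lie in $\mathcal S_\theta$; since a positive unital linear map preserves $\mathcal S_\theta$, so do $\Phi(A\nabla_v B)$ and $\Phi(A\sigma B)$, and scaling by the positive scalar $K$ keeps $K\Phi(A\sigma B)\in\mathcal S_\theta$. This is precisely what permits applying Lemma~\ref{9} to these matrices. I would also record that for $p=1$ the constant in Theorem~\ref{t2'} is $\alpha=\max\{K,4^{1-2}K\}=\max\{K,K/4\}=K$, because $K\geqslant1$.

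Then the chain runs as follows. Applying the upper estimate of Lemma~\ref{9} to $\Phi(A\nabla_v B)$ gives
\begin{equation*}
\cos^2\theta\,\Re f\bigl(\Phi(A\nabla_v B)\bigr)\leqslant f\bigl(\Re\Phi(A\nabla_v B)\bigr).
\end{equation*}
Theorem~\ref{t2'} with $p=1$ yields $\Re\Phi(A\nabla_v B)\leqslant K\,\Re\Phi(A\sigma B)=\Re\bigl(K\Phi(A\sigma B)\bigr)$, so operator monotonicity of $f$ bounds the right-hand side by $f\bigl(\Re(K\Phi(A\sigma B))\bigr)$, and the lower estimate of Lemma~\ref{9} lifts this to $\Re f\bigl(K\Phi(A\sigma B)\bigr)$. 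Rearranging the $\cos^2\theta$ gives the first displayed inequality. For the second inequality I would use that $f(\lambda x)\leqslant\lambda f(x)$ for $\lambda\geqslant1$ (a consequence of the concavity of $f\in\mathbf m$ together with $f(0)\geqslant0$) with $\lambda=K\geqslant1$ to pull $K$ outside $f$, and then take real parts.

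The step needing the most care is the bookkeeping of which half of Lemma~\ref{9} is used where: the upper bound is applied to $\Phi(A\nabla_v B)$ on the left, while the lower bound is applied to $K\Phi(A\sigma B)$ on the right, and interchanging them would misplace the surviving $\sec^2\theta$. Beyond that, the argument is a faithful analogue of the proof of Theorem~\ref{t02}.
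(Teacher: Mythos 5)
Your proposal is correct and is exactly the proof the paper intends: the paper gives no explicit argument here, saying only ``Similarly using Theorem~\ref{t2'}, we have this Theorem,'' and your line-by-line transcription of the proof of Theorem~\ref{t02} with Theorem~\ref{t2'} (at $p=1$, so $\alpha=\max\{K,K/4\}=K$) replacing Theorem~\ref{t2} is precisely that argument, with the correct bookkeeping of the two halves of Lemma~\ref{9} and the final step $f(\lambda x)\leqslant\lambda f(x)$ for $\lambda=K\geqslant1$.
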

%%%%%%%%%%%%%%%%%%%%%%%%%%%

The next theorem is the sectorial version of \eqref{13}.
\begin{theorem}\label{t4}
Let $A,B\in {\mathcal S}_{\theta}$ such that $0<mI\leqslant \Re A,\Re B\leqslant MI$. Then for every positive unital linear map $\Phi$ and $\sigma_1$ and $\sigma_2$ between $\nabla_{\nu}$ and $!_{\nu}$,
\begin{equation*}
\Re\left( f(\Phi(A))\sigma_1f(\Phi( B))\right)\leqslant K\sec^{4}\theta\ \Re f\left(\Phi(A\sigma_2 B)\right),
\end{equation*}
for $f\in \mathbf{m}$.
\end{theorem}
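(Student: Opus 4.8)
The plan is to reduce this sectorial statement to the positive-definite inequality \eqref{13} by peeling off the ``deviation from positivity'' factors $\sec^2\theta$ one at a time using Lemmas \ref{2} and \ref{9}. Throughout I will use that both a positive unital linear map and an operator monotone function preserve the sector $\mathcal S_\theta$: since $\Re\Phi(X)=\Phi(\Re X)$ and $\pm\Im\Phi(X)=\pm\Phi(\Im X)\leqslant\tan\theta\,\Phi(\Re X)$, we get $\Phi(X)\in\mathcal S_\theta$ whenever $X\in\mathcal S_\theta$; and $f(Y)\in\mathcal S_\theta$ for $Y\in\mathcal S_\theta$ by \cite{bed1} (this is implicit in Lemma \ref{9}), while $A\sigma_2 B\in\mathcal S_\theta$ by Lemma \ref{2}. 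In particular $\Phi(A),\Phi(B),f(\Phi(A)),f(\Phi(B))$ and $\Phi(A\sigma_2 B)$ all lie in $\mathcal S_\theta$, so every lemma below applies. Moreover $mI\leqslant\Phi(\Re A),\Phi(\Re B)\leqslant MI$, so \eqref{13} is available for the positive definite matrices $\Re A,\Re B$.

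First I would apply the upper bound in Lemma \ref{2} to the pair $f(\Phi(A)),f(\Phi(B))$, obtaining
\begin{equation*}
\Re\bigl(f(\Phi(A))\,\sigma_1\, f(\Phi(B))\bigr)\leqslant\sec^{2}\theta\,\bigl(\Re f(\Phi(A))\,\sigma_1\,\Re f(\Phi(B))\bigr).
\end{equation*}
Next, the upper bound in Lemma \ref{9} gives $\Re f(\Phi(A))\leqslant\sec^{2}\theta\,f(\Phi(\Re A))$ and likewise for $B$; substituting these into the monotone mean $\sigma_1$ and using the positive homogeneity $(\lambda C)\sigma_1(\lambda D)=\lambda(C\sigma_1 D)$ with $\lambda=\sec^2\theta$, the right-hand side is bounded by $\sec^{4}\theta\,\bigl(f(\Phi(\Re A))\,\sigma_1\,f(\Phi(\Re B))\bigr)$.

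Now the argument of the mean is built from the positive definite matrices $\Re A,\Re B$, so \eqref{13} applies and yields
\begin{equation*}
f(\Phi(\Re A))\,\sigma_1\,f(\Phi(\Re B))\leqslant K\,f\bigl(\Phi(\Re A\,\sigma_2\,\Re B)\bigr).
\end{equation*}
It remains to climb back up to the sectorial right-hand side. The lower bound in Lemma \ref{2} gives $\Re A\,\sigma_2\,\Re B\leqslant\Re(A\sigma_2 B)$, so operator monotonicity of $f$ and monotonicity of $\Phi$ give $f\bigl(\Phi(\Re A\,\sigma_2\,\Re B)\bigr)\leqslant f\bigl(\Phi\Re(A\sigma_2 B)\bigr)=f\bigl(\Re\Phi(A\sigma_2 B)\bigr)$; finally the lower bound in Lemma \ref{9}, applied to $\Phi(A\sigma_2 B)\in\mathcal S_\theta$, gives $f\bigl(\Re\Phi(A\sigma_2 B)\bigr)\leqslant\Re f\bigl(\Phi(A\sigma_2 B)\bigr)$. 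Collecting the accumulated constants produces exactly $K\sec^{4}\theta\,\Re f(\Phi(A\sigma_2 B))$.

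The routine parts are the homogeneity and monotonicity manipulations; the main conceptual point needing care is verifying that every matrix fed into Lemmas \ref{2} and \ref{9} genuinely lies in $\mathcal S_\theta$ — that is, that the sector is preserved under $\Phi$, under $f$, and under the mean $\sigma_2$ — which is handled by the preliminary remarks above. Once that bookkeeping is in place the proof is just the five-link chain displayed above: the two factors $\sec^2\theta$ coming from the upper bounds in Lemmas \ref{2} and \ref{9} combine to $\sec^4\theta$ (the two applications of Lemma \ref{9} collapse to a single factor via homogeneity of the mean), the lower bounds in Lemmas \ref{2} and \ref{9} contribute no constant, and the single $K$ enters solely through \eqref{13}.
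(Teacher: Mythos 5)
Your proof is correct and follows essentially the same route as the paper's: the identical five-link chain (Lemma \ref{2} upper bound, Lemma \ref{9} upper bound with homogeneity of the mean, inequality \eqref{13} applied to $\Re A,\Re B$, then the lower bounds in Lemmas \ref{2} and \ref{9}), yielding the same constant $K\sec^4\theta$. Your preliminary verification that $\Phi$, $f$, and $\sigma_2$ preserve $\mathcal S_\theta$ is bookkeeping the paper leaves implicit, but it does not change the argument.
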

\begin{proof} By Lemmas \ref{2} and \ref{9},
\begin{align*}
\Re\left( f(\Phi(A))\sigma_1f(\Phi( B))\right)
&\leqslant\sec^2\theta\ \left(\Re f(\Phi(A))\sigma_1\Re f(\Phi( B))\right)&&\\
%&=\left(\sec^2\theta\ \Re f(\Phi(A))\right)\sigma_1\left((\sec^2\theta\ \Re f(\Phi(B))\right)\\
&\leqslant \sec^4\theta f(\Re (\Phi(A)))\sigma_1f(\Re (\Phi(B)))&&\\
&=\sec^4\theta f( \Phi(\Re(A)))\sigma_1 f(\Phi(\Re (B)))\end{align*}
Now applying \eqref{13} and Lemmas  \ref{2} and \ref{9}, we get
\begin{align*}
 f( \Phi(\Re(A)))\sigma_1 f(\Phi(\Re (B)))&\leqslant K f\left(\Phi(\Re A\sigma_2\Re B)\right)&&\\
&\leqslant K\ f\left(\Phi(\Re( A\sigma_2 B))\right)&&\\
&=K f\left(\Re(\Phi( A\sigma_2 B))\right)\\
&\leqslant K \Re f(\Phi( A\sigma_2 B)).&&
\end{align*}
\end{proof}
\begin{remark}
In this Theorem, consider $\sigma_2=\nabla_v$. Since $f\in\mathbf{m}$ is operator concave, the last part of the proof can be written as
\begin{align*}
 f( \Phi(\Re A ))\sigma_1 f(\Phi(\Re  B ))&\leqslant f( \Phi(\Re A ))\nabla_v f(\Phi(\Re  B ))\\&\leqslant  f\left(\Phi(\Re A) \nabla_v\Phi(\Re B)\right)&&\\&= f\left(\Phi(\Re A \nabla_v\Re B)\right)&&\\
&= f\left(\Re(\Phi( A\nabla_v B))\right)\\
&\leqslant  \Re f(\Phi( A\nabla_v B)).&&
\end{align*}
This leads to
\begin{equation*}
\Re\left( f(\Phi(A))\sigma_1f(\Phi( B))\right)\leqslant \sec^{4}\theta\ \Re f\left(\Phi(A\nabla_v B)\right),
\end{equation*}
This result was proved in \cite{bed1} where $\sigma_1$ is geometric mean.
\end{remark}

%%%%%%%%%%%%%%%%%%%%%%%%%
%%%%%%%%%%%%%%%%%%%%%%%%%
%%%%%%%%%%%%%%%%%%%%%%%%%
\section{determinant, singularvalue and norm inequalities}
First, we intend to use Theorems \ref{t2}, \ref{t2'} and \ref{t5} to state determinant, singularvalue and norm versions of these theorems.

For determinant version, we use the following lemma.
\begin{lemma}\label{67}(\cite{Hor}, \cite{Lin1})
If $A\in {\mathcal S}_{\theta}$, then
\begin{equation*}
\det(\Re A)\leqslant \vert \det( A)\vert\leqslant \sec^n\theta\ \det(\Re A).
\end{equation*}
\end{lemma}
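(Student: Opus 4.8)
The plan is to reduce $\det A$ to a product over the eigenvalues of a single Hermitian matrix, on which the sector hypothesis becomes a transparent eigenvalue bound. First I would write $A = \Re A + i\,\Im A =: P + iS$, where $P = \Re A > 0$ and $S = \Im A$ are Hermitian. For any unit vector $x$ one has $\langle Ax,x\rangle = \langle Px,x\rangle + i\langle Sx,x\rangle$ with both inner products real, so $\Re\langle Ax,x\rangle = \langle Px,x\rangle$ and $\Im\langle Ax,x\rangle = \langle Sx,x\rangle$. Hence the membership $A\in\mathcal{S}_\theta$, i.e. $|\Im\langle Ax,x\rangle|\leqslant(\tan\theta)\,\Re\langle Ax,x\rangle$ for all $x$, is exactly the operator inequality
$$-\tan\theta\,P\leqslant S\leqslant\tan\theta\,P.$$

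Next I would factor out the positive part by a congruence. Since $P>0$, write $A = P^{1/2}\bigl(I + iT\bigr)P^{1/2}$ with $T := P^{-1/2}SP^{-1/2}$ Hermitian. Taking determinants and using that $\det P>0$ is a positive real scalar gives $|\det A| = \det P\cdot|\det(I+iT)|$. Applying the congruence by $P^{-1/2}$ to the two-sided bound above turns it into $-\tan\theta\,I\leqslant T\leqslant\tan\theta\,I$, so every eigenvalue $t_j$ of $T$ satisfies $|t_j|\leqslant\tan\theta$.

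Then I would evaluate $|\det(I+iT)|$ by diagonalizing. Unitarily diagonalizing $T=U\,\mathrm{diag}(t_1,\dots,t_n)\,U^*$ gives $\det(I+iT)=\prod_{j=1}^n(1+it_j)$, so
$$|\det(I+iT)| = \prod_{j=1}^n\sqrt{1+t_j^2}.$$
Since $0\leqslant t_j^2\leqslant\tan^2\theta$, each factor lies in $[1,\sec\theta]$ (using $1+\tan^2\theta=\sec^2\theta$), whence the product lies in $[1,\sec^n\theta]$. Multiplying through by $\det P = \det(\Re A)$ yields $\det(\Re A)\leqslant|\det A|\leqslant\sec^n\theta\,\det(\Re A)$, as claimed.

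There is no serious obstacle here; the only point requiring care is the \emph{translation step}, namely confirming that the numerical-range description of $\mathcal{S}_\theta$ converts precisely to $-\tan\theta\,P\leqslant S\leqslant\tan\theta\,P$, and that this inequality is preserved as $|T|\leqslant\tan\theta\,I$ under the congruence by $P^{-1/2}$ that simultaneously renders the determinant a product over real eigenvalues. Everything after that reduces to the elementary bound $1\leqslant\sqrt{1+t_j^2}\leqslant\sec\theta$.
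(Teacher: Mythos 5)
Your proof is correct. The paper does not prove this lemma at all --- it simply cites it from the literature (Horn--Johnson's \emph{Matrix Analysis} and Lin's Arch.\ Math.\ paper), so there is no internal proof to compare against; your argument is essentially the standard one from those sources. The key steps all check out: the numerical-range condition $|\Im\langle Ax,x\rangle|\leqslant\tan\theta\,\Re\langle Ax,x\rangle$ is indeed equivalent to the operator inequality $-\tan\theta\,P\leqslant S\leqslant\tan\theta\,P$ because $\langle Px,x\rangle$ and $\langle Sx,x\rangle$ are exactly the real and imaginary parts of $\langle Ax,x\rangle$; the congruence $A=P^{1/2}(I+iT)P^{1/2}$ with $T=P^{-1/2}SP^{-1/2}$ converts this to $\pm T\leqslant\tan\theta\,I$; and the factorization $|\det A|=\det P\cdot\prod_j\sqrt{1+t_j^2}$ together with $1\leqslant\sqrt{1+t_j^2}\leqslant\sec\theta$ gives both bounds at once. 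This is a clean, self-contained derivation that the paper's readers would otherwise have to chase through the references.
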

\begin{theorem}
Let $A,B\in {\mathcal S}_{\theta}$ and $\sigma^*\leqslant\sigma_1,\sigma_2\leqslant\sigma$  for some operator mean $\sigma$.
\begin{itemize}
\item  If $0<mI\leqslant \Re A,\Re B\leqslant MI$, then
\begin{equation*}
\mid\det(A\sigma_1 B)\mid\leqslant\sec^{3n}\theta\ K^n\mid\det(A\sigma_2B)\mid.
\end{equation*}
\item If $0<mI\leqslant \Re A^{-1},\Re B^{-1}\leqslant MI$, then
\begin{equation*}
\mid\det(A\sigma_1 B)\mid\leqslant\sec^{5n}\theta\ K^n\mid\det(A\sigma_2B)\mid.
\end{equation*}
\end{itemize}
In addition, if $!_v\leqslant\sigma\leqslant\nabla_v$ and $0<mI\leqslant \Re A,\Re B\leqslant MI$, then
\begin{equation*}
\mid\det(A\nabla_v B)\mid\leqslant\sec^{n}\theta\ K^n\mid\det(A\sigma B)\mid.
\end{equation*}
\end{theorem}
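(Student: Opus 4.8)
The plan is to reduce each of the three determinant inequalities to the operator inequalities already established in Theorems \ref{t2}, \ref{t5} and \ref{t2'}, using Lemma \ref{67} as the bridge between $|\det(\cdot)|$ and $\det(\Re(\cdot))$. Throughout I would specialize the positive unital linear map $\Phi$ to the identity and take the exponent $p=1$. Since the determinant of a scalar multiple of an $n\times n$ matrix pulls the scalar out to the $n$-th power, and $\det(X^p)=(\det X)^p$ for positive $X$, the choice of $p$ is in fact immaterial for the determinant version; $p=1$ merely gives the cleanest bookkeeping.

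For the first item I would proceed in three steps. First, since $A\sigma_1 B\in\mathcal S_\theta$ by Lemma \ref{2}, the upper half of Lemma \ref{67} gives $|\det(A\sigma_1 B)|\leqslant\sec^n\theta\,\det(\Re(A\sigma_1 B))$. Second, Theorem \ref{t2} with $\Phi=\mathrm{id}$ and $p=1$ (which lies in the branch $p\leqslant 2$) yields $\Re(A\sigma_1 B)\leqslant \sec^2\theta\,K\,\Re(A\sigma_2 B)$; applying monotonicity of $\det$ on positive matrices and extracting the scalar gives $\det(\Re(A\sigma_1 B))\leqslant\sec^{2n}\theta\,K^n\,\det(\Re(A\sigma_2 B))$. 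Third, the lower half of Lemma \ref{67} applied to $A\sigma_2 B\in\mathcal S_\theta$ gives $\det(\Re(A\sigma_2 B))\leqslant|\det(A\sigma_2 B)|$. Chaining these three estimates collects the factor $\sec^n\theta\cdot\sec^{2n}\theta=\sec^{3n}\theta$ together with $K^n$, which is the claim.

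The second item follows the identical template, but because its hypothesis is now on $\Re A^{-1},\Re B^{-1}$ I would invoke Theorem \ref{t5} in place of Theorem \ref{t2}. With $\Phi=\mathrm{id}$ and $p=1$ the constant $\alpha=\max\{K,4^{-1}K\}$ collapses to $K$ (as $K\geqslant1$), and Theorem \ref{t5} supplies $\Re(A\sigma_1 B)\leqslant K\sec^4\theta\,\Re(A\sigma_2 B)$. The two extra powers of $\sec^2\theta$ relative to the first item are exactly what turn $\sec^{3n}\theta$ into $\sec^{5n}\theta$. The additional $\nabla_v$ statement is again the same pattern with $\sigma_1=\nabla_v$, where Theorem \ref{t2'} applies; here $\Re(A\nabla_v B)=\Re A\,\nabla_v\,\Re B$ holds exactly because the arithmetic mean commutes with taking real parts, so the middle step carries no $\sec^2\theta$ factor and only the single $\sec^n\theta$ from Lemma \ref{67} survives, giving the coefficient $\sec^n\theta\,K^n$.

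There is no genuine analytic obstacle here, since each step is a direct citation followed by taking determinants; the only point demanding care is the bookkeeping of the powers of $\sec\theta$. One must apply the upper half of Lemma \ref{67} to $A\sigma_1 B$ (contributing one $\sec^n\theta$) and the lower half to $A\sigma_2 B$ (contributing nothing), while the real-part comparison supplies $\sec^{2n}\theta$ in the first item, $\sec^{4n}\theta$ in the second, and no power in the additional case. Confirming that $p=1$ lies in the admissible branch of each cited theorem, and that $A\sigma_i B\in\mathcal S_\theta$ so that Lemma \ref{67} is applicable at all, then completes the argument.
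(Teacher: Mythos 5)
Your proposal is correct and is exactly the argument the paper intends: the paper gives no written proof, but prefaces the theorem by saying it follows from Theorems \ref{t2}, \ref{t5} and \ref{t2'} (with $\Phi=\mathrm{id}$, $p=1$) combined with Lemma \ref{67}, which is precisely your chain of estimates. Your bookkeeping of the $\sec\theta$ powers ($\sec^{n}\theta\cdot\sec^{2n}\theta$, $\sec^{n}\theta\cdot\sec^{4n}\theta$, and $\sec^{n}\theta$ alone in the $\nabla_v$ case) matches the stated constants.
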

%%%%%%%%%%%%%%%%%%%%%%%%%%
The following lemma, leads us to singularvalue version.
\begin{lemma}\label{31}(\cite{Bha1},\cite{Dur})
Let $A\in {\mathcal S}_{\theta}$. Then
\begin{equation*}
\lambda_j(\Re A)\leqslant s_j(A)\leqslant \sec^2\theta\lambda_j(\Re A),~~~~j=1,...,n.
\end{equation*}
\end{lemma}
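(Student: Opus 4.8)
The plan is to establish the two inequalities separately, obtaining the upper one as a consequence of the lower one applied to $A^{-1}$. Throughout I order eigenvalues and singular values decreasingly, so $\lambda_1(\cdot)\geqslant\cdots\geqslant\lambda_n(\cdot)$ and $s_1(\cdot)\geqslant\cdots\geqslant s_n(\cdot)$.

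For the lower bound $\lambda_j(\Re A)\leqslant s_j(A)$ I would not use the sector hypothesis at all, since this is the Fan--Hoffman inequality, valid for every matrix. The quickest route is the Courant--Fischer min--max principle. For each unit vector $x$ one has the pointwise estimate $\langle(\Re A)x,x\rangle=\Re\langle Ax,x\rangle\leqslant|\langle Ax,x\rangle|\leqslant\|Ax\|$, and since a pointwise inequality passes to the minimum over any subspace, taking the maximum over all $j$-dimensional subspaces $V$ gives
$$\lambda_j(\Re A)=\max_{\dim V=j}\min_{x\in V,\,\|x\|=1}\langle(\Re A)x,x\rangle\leqslant\max_{\dim V=j}\min_{x\in V,\,\|x\|=1}\|Ax\|=s_j(A).$$

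For the upper bound $s_j(A)\leqslant\sec^2\theta\,\lambda_j(\Re A)$ the idea is to pass to the inverse. I would first apply the lower bound just proved to the matrix $A^{-1}$ (which exists since $A\in\mathcal S_\theta$), giving $\lambda_j(\Re(A^{-1}))\leqslant s_j(A^{-1})$, and then invoke the right-hand part of \eqref{1}, namely $\Re^{-1}(A)\leqslant\sec^2\theta\,\Re(A^{-1})$, which by Weyl's monotonicity theorem yields $\lambda_j(\Re^{-1}(A))\leqslant\sec^2\theta\,\lambda_j(\Re(A^{-1}))$. Feeding in the elementary inversion identities $s_j(A^{-1})=s_{n-j+1}(A)^{-1}$ and $\lambda_j(\Re^{-1}(A))=\lambda_{n-j+1}(\Re A)^{-1}$ produces
$$\frac{1}{\lambda_{n-j+1}(\Re A)}\leqslant\sec^2\theta\,\lambda_j(\Re(A^{-1}))\leqslant\frac{\sec^2\theta}{s_{n-j+1}(A)},$$
which rearranges to $s_{n-j+1}(A)\leqslant\sec^2\theta\,\lambda_{n-j+1}(\Re A)$; relabelling $k=n-j+1$ completes the argument.

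The upper bound is the real obstacle, and what makes it delicate is that it cannot be read off from a single operator inequality. One is tempted to prove $|A|\leqslant\sec^2\theta\,\Re A$ and then apply Weyl monotonicity directly, but this operator inequality is in fact false (it already fails for a $2\times2$ matrix on the boundary of the sector), and the naive submultiplicative singular-value estimates leak a factor governed by the condition number of $\Re A$ rather than by $\sec\theta$. The device that resolves this is to transport the problem to $A^{-1}$, where the already-proved Fan--Hoffman bound is available and the single factor $\sec^2\theta$ is supplied exactly once by \eqref{1}; after that only the bookkeeping of reversing the orders of singular values and eigenvalues under inversion remains.
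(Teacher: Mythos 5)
Your proof is correct, and there is nothing in the paper to compare it against: the lemma is stated without proof, being quoted from the literature (Bhatia's book for the lower bound, Drury--Lin for the upper bound). Your argument reconstructs precisely those sources' proofs: the min--max derivation of $\lambda_j(\Re A)\leqslant s_j(A)$ is the Fan--Hoffman inequality exactly as proved in Bhatia, and the passage to $A^{-1}$ combined with the right-hand side of \eqref{1} and Weyl's monotonicity theorem is in substance Drury--Lin's duality argument, with the inversion identities $s_j(A^{-1})=s_{n-j+1}(A)^{-1}$ and $\lambda_j\bigl(\Re^{-1}(A)\bigr)=\lambda_{n-j+1}(\Re A)^{-1}$ applied correctly. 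Your closing remark is also sound: the operator inequality $|A|\leqslant\sec^2\theta\,\Re A$ does fail in general (e.g.\ for $2\times 2$ upper triangular sector matrices), so the eigenvalue-by-eigenvalue route through the inverse is genuinely needed rather than an artifact of the write-up.
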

\begin{theorem}
Let $A,B\in {\mathcal S}_{\theta}$ and $\sigma^*\leqslant\sigma_1,\sigma_2\leqslant\sigma$  for some operator mean $\sigma$.
\begin{itemize}
\item  If $0<mI\leqslant \Re A,\Re B\leqslant MI$, then
\begin{equation*}
s_j(A\sigma_1 B)\leqslant\sec^{4}\theta\ Ks_j(A\sigma_2B).
\end{equation*}
\item If $0<mI\leqslant \Re A^{-1},\Re B^{-1}\leqslant MI$, then
\begin{equation*}
s_j(A\sigma_1 B)\leqslant\sec^{6}\theta\ Ks_j(A\sigma_2B).
\end{equation*}
\end{itemize}
In addition, if $!_v\leqslant\sigma\leqslant\nabla_v$ and $0<mI\leqslant \Re A,\Re B\leqslant MI$, then
\begin{equation*}
s_j(A\nabla_v B)\leqslant\sec^{2}\theta\ Ks_j(A\sigma B).
\end{equation*}
\end{theorem}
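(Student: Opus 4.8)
The plan is to reduce each singular-value estimate to the corresponding \emph{operator} inequality already established in Theorems \ref{t2}, \ref{t5} and \ref{t2'}, and then to pass from operators to singular values by means of Lemma \ref{31} together with Weyl's monotonicity principle for the eigenvalues of Hermitian matrices. The crucial observation is that the identity map is itself a positive unital linear map, so I may specialize $\Phi=\mathrm{id}$ in all three theorems; moreover, taking $p=1$ turns the quantities $\Re^p\Phi(\cdot)$ into the plain positive-definite matrices $\Re(\cdot)$, and the constant $\alpha=\max\{K,4^{1-2/p}K\}$ collapses to $K$.

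For the first item, specializing Theorem \ref{t2} (case $p\leqslant 2$) with $\Phi=\mathrm{id}$ and $p=1$ yields the operator inequality
\begin{equation*}
\Re(A\sigma_1 B)\leqslant\sec^{2}\theta\ K\ \Re(A\sigma_2 B).
\end{equation*}
Since $A,B\in\mathcal S_\theta$ forces $A\sigma_1 B,A\sigma_2 B\in\mathcal S_\theta$ by Lemma \ref{2}, and the real parts appearing above are positive definite, Weyl's monotonicity applied to the last display gives $\lambda_j(\Re(A\sigma_1 B))\leqslant\sec^2\theta\ K\ \lambda_j(\Re(A\sigma_2 B))$ for every $j$. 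Finally I sandwich with Lemma \ref{31}: combining its upper bound $s_j(A\sigma_1 B)\leqslant\sec^2\theta\ \lambda_j(\Re(A\sigma_1 B))$ with its lower bound $\lambda_j(\Re(A\sigma_2 B))\leqslant s_j(A\sigma_2 B)$ contributes one extra factor $\sec^2\theta$, for a total coefficient $\sec^4\theta\ K$.

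The second and third items follow the same three-step template, only the input inequality changing. For the second item I feed in Theorem \ref{t5} at $p=1$, which already carries the factor $\sec^4\theta$; one further $\sec^2\theta$ from the singular-value sandwich of Lemma \ref{31} yields $\sec^6\theta\ K$. For the last assertion I use Theorem \ref{t2'} at $p=1$, whose right-hand side has no secant factor, so the sandwich contributes the single $\sec^2\theta$ and gives $\sec^2\theta\ K$. I do not anticipate any genuine obstacle: the only point requiring care is to check that $A\sigma_i B\in\mathcal S_\theta$ so that Lemma \ref{31} is applicable, and that the eigenvalues in play are precisely those of the positive-definite real parts, so that Weyl monotonicity and the interlacing of $s_j$ with $\lambda_j(\Re\,\cdot)$ are legitimately invoked.
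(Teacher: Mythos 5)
Your proposal is correct and is exactly the argument the paper intends: the theorem is stated without proof immediately after Lemma \ref{31}, the understood derivation being to specialize Theorems \ref{t2}, \ref{t5} and \ref{t2'} to $\Phi=\mathrm{id}$ and $p=1$ (where $\alpha=K$), pass to eigenvalues by Weyl monotonicity, and pick up one extra $\sec^2\theta$ from the two-sided bound $\lambda_j(\Re X)\leqslant s_j(X)\leqslant\sec^2\theta\,\lambda_j(\Re X)$. Your bookkeeping of the secant powers ($\sec^4\theta$, $\sec^6\theta$, $\sec^2\theta$) matches the stated constants, and you correctly note the point the paper leaves tacit, namely that $A\sigma_i B\in\mathcal S_\theta$ (Lemma \ref{2}) so that Lemma \ref{31} applies.
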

%%%%%%%%%%%%%%%%%%%%%%%%%%%%%%

Finally, this lemma, helps us to prove the norm versions.
 \begin{lemma}\label{32}(\cite{Bha1},\cite{zha3})
Let $A\in {\mathcal S}_{\theta}$. Then
\begin{equation*}
\|\Re A\|\leqslant \| A\|\leqslant \sec\theta\| \Re A\|
\end{equation*}
for any unitarily invariant norm $\|\cdot\|$ on $B(H)$.
\end{lemma}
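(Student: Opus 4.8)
The plan is to prove the two estimates by completely different means: the lower bound is soft and uses nothing about the sector, while the upper bound is where the angle $\theta$ enters and where all the work lies.

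For $\|\Re A\|\le\|A\|$ I would use only that a unitarily invariant norm is a function of the singular values, so that $\|A^{*}\|=\|A\|$. Writing $\Re A=\tfrac12(A+A^{*})$ and applying the triangle inequality then gives $\|\Re A\|\le\tfrac12\bigl(\|A\|+\|A^{*}\|\bigr)=\|A\|$, and this is the whole argument.

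For the upper bound I would first translate the hypothesis into an operator inequality. Put $A=H+iK$ with $H=\Re A>0$ and $K=\Im A$ Hermitian; then $A\in\mathcal{S}_{\theta}$ says precisely that $|\langle Kx,x\rangle|\le\tan\theta\,\langle Hx,x\rangle$ for all $x$, i.e. $-\tan\theta\,H\le K\le\tan\theta\,H$. Assuming $\theta\in(0,\pi/2)$ (the case $\theta=0$ forces $K=0$ and is trivial), the decisive step is to introduce the two matrices $P=\sec\theta\,H+\csc\theta\,K$ and $Q=\sec\theta\,H-\csc\theta\,K$. Because $\csc\theta\tan\theta=\sec\theta$, the sector bounds give $P\ge0$ and $Q\ge0$, and comparing real and imaginary parts yields the representation
\begin{equation*}
A=\tfrac12\bigl(e^{i\theta}P+e^{-i\theta}Q\bigr),\qquad P+Q=2\sec\theta\,H .
\end{equation*}

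It remains to compare $A$ with $H$ through $P$ and $Q$, and here I would reduce everything to the singular-value weak majorization
\begin{equation*}
s\bigl(e^{i\theta}P+e^{-i\theta}Q\bigr)\prec_{w}s(P+Q)\qquad(P,Q\ge0).
\end{equation*}
Granting this, Ky Fan's dominance theorem converts weak majorization into the norm bound $\|e^{i\theta}P+e^{-i\theta}Q\|\le\|P+Q\|$ for every unitarily invariant norm, whence $\|A\|=\tfrac12\|e^{i\theta}P+e^{-i\theta}Q\|\le\tfrac12\|P+Q\|=\sec\theta\,\|H\|=\sec\theta\,\|\Re A\|$, which is exactly the claimed inequality. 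I expect this majorization to be the genuine obstacle. After factoring out the unimodular scalar $e^{-i\theta}$ it is equivalent to $s(P+zQ)\prec_{w}s(P+Q)$ for $|z|=1$, and it cannot be upgraded to a termwise bound: one checks on $2\times2$ examples that individual singular values $s_{j}(e^{i\theta}P+e^{-i\theta}Q)$ may exceed $s_{j}(P+Q)$, so only the summed form survives. I would establish it by an arithmetic--geometric-mean type singular-value inequality applied to the factorization of $e^{i\theta}P+e^{-i\theta}Q$ as the product of the block row $(e^{i\theta/2}P^{1/2}\ \ e^{-i\theta/2}Q^{1/2})$ with the corresponding block column, or, failing a clean self-contained argument, by invoking the standard weak-majorization estimates for sums of positive matrices from matrix-analysis references.
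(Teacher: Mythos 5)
This lemma is stated in the paper without proof --- it is simply quoted from \cite{Bha1} and \cite{zha3} --- so the benchmark is the argument in those sources, which writes $A=X(I+iT)X^{*}$ with $T$ Hermitian, $\|T\|_{\infty}\leqslant\tan\theta$ (operator norm) and $XX^{*}=\Re A$, and then applies $\|XYX^{*}\|\leqslant\|Y\|_{\infty}\,\|XX^{*}\|$. Your route is genuinely different in its packaging and is essentially correct: the lower bound via $\|A^{*}\|=\|A\|$ is exactly right; your decomposition checks out (positivity of $P$ and $Q$ follows from $\csc\theta\tan\theta=\sec\theta$, and matching real and imaginary parts gives $A=\tfrac12\bigl(e^{i\theta}P+e^{-i\theta}Q\bigr)$ with $P+Q=2\sec\theta\,\Re A$); and the reduction to the weak majorization $s\bigl(e^{i\theta}P+e^{-i\theta}Q\bigr)\prec_{w}s(P+Q)$ plus Ky Fan dominance is sound, as is your warning that no termwise singular-value bound can hold here.

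The only soft spot is the tool you name for that majorization. An ``AGM-type'' singular-value inequality in its usual termwise Bhatia--Kittaneh form \cite{bh} cannot do the job: arranged so that its right-hand side becomes $P+Q$, it would be precisely the termwise bound you yourself observe is false; arranged with the adjoints in their correct positions, it bounds $2s_{j}(RC)$ by the $j$-th singular value of a $2n\times 2n$ block matrix ($R^{*}R+CC^{*}$), which is not $2(P+Q)$. What does work, with exactly your factorization $R=(e^{i\theta/2}P^{1/2}\ \ e^{-i\theta/2}Q^{1/2})$ and $C$ the corresponding column, so that $RC=e^{i\theta}P+e^{-i\theta}Q$ and $RR^{*}=C^{*}C=P+Q$, is the summed form: Horn's weak majorization $s(RC)\prec_{w}\bigl(s_{j}(R)s_{j}(C)\bigr)_{j}$ combined with the scalar AM--GM bound $s_{j}(R)s_{j}(C)\leqslant\tfrac12\bigl(s_{j}(RR^{*})+s_{j}(C^{*}C)\bigr)=s_{j}(P+Q)$ gives $\sum_{j\leqslant k}s_{j}(RC)\leqslant\sum_{j\leqslant k}s_{j}(P+Q)$ for every $k$. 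Equivalently, the Cauchy--Schwarz inequality for unitarily invariant norms, $\|A^{*}B\|^{2}\leqslant\|A^{*}A\|\,\|B^{*}B\|$, applied with $A^{*}=R$ and $B=C$ finishes in one line; or observe that $e^{i\theta}P+e^{-i\theta}Q=XUX^{*}$ with $X=(P^{1/2}\ \ Q^{1/2})$ and $U=e^{i\theta}I\oplus e^{-i\theta}I$ unitary, which exhibits your proof and the cited one as two instances of the same principle $\|XVX^{*}\|\leqslant\|V\|_{\infty}\|XX^{*}\|$. With any of these substitutions your argument is complete; what it buys over the cited proof is that the angle enters only through an explicit rotation of two positive matrices, at the price of needing the majorization machinery for rectangular blocks.
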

\begin{theorem}
Let $A,B\in {\mathcal S}_{\theta}$ and $\sigma^*\leqslant\sigma_1,\sigma_2\leqslant\sigma$  for some operator mean $\sigma$.
\begin{itemize}
\item  If $0<mI\leqslant \Re A,\Re B\leqslant MI$, then
\begin{equation*}
\|A\sigma_1 B\|\leqslant\sec^{3}\theta\ K\|A\sigma_2B\|.
\end{equation*}
\item If $0<mI\leqslant \Re A^{-1},\Re B^{-1}\leqslant MI$, then
\begin{equation*}
\|A\sigma_1 B\|\leqslant\sec^{5}\theta\ K\|A\sigma_2B\|.
\end{equation*}
\end{itemize}
In addition, if $!_v\leqslant\sigma\leqslant\nabla_v$ and $0<mI\leqslant \Re A,\Re B\leqslant MI$, then
\begin{equation*}
\|A\nabla_v B\|\leqslant\sec\theta\ K\|A\sigma B\|.
\end{equation*}
\end{theorem}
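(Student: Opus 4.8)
The plan is to reduce each assertion to the corresponding operator-mean inequality already established in Section~2, specialized to the identity map $\Phi=\mathrm{id}$ and the exponent $p=1$, and then to transfer the resulting positive-operator inequality to unitarily invariant norms via Lemma~\ref{32}. Observe first that at $p=1$ the constant $\alpha=\max\{K,4^{1-2/p}K\}=\max\{K,\tfrac14 K\}$ collapses to $K$, since $K\geqslant 1$; this is what makes all three coefficients come out as a pure power of $\sec\theta$ times $K$.

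For the first bullet I would take $\Phi=\mathrm{id}$, $p=1$ in Theorem~\ref{t2}, giving the operator inequality $\Re(A\sigma_1 B)\leqslant\sec^2\theta\,K\,\Re(A\sigma_2 B)$. Both sides are positive, so monotonicity of a unitarily invariant norm yields $\|\Re(A\sigma_1 B)\|\leqslant\sec^2\theta\,K\,\|\Re(A\sigma_2 B)\|$. Sandwiching this between the two halves of Lemma~\ref{32} --- namely $\|A\sigma_1 B\|\leqslant\sec\theta\,\|\Re(A\sigma_1 B)\|$ on the left and $\|\Re(A\sigma_2 B)\|\leqslant\|A\sigma_2 B\|$ on the right --- produces the desired bound with coefficient $\sec^3\theta\,K$.

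The second bullet is identical in structure but invokes Theorem~\ref{t5} (valid under the hypothesis $0<mI\leqslant\Re A^{-1},\Re B^{-1}\leqslant MI$) in place of Theorem~\ref{t2}: at $p=1$ this gives $\Re(A\sigma_1 B)\leqslant K\sec^4\theta\,\Re(A\sigma_2 B)$, and the same sandwiching by Lemma~\ref{32} contributes one further factor of $\sec\theta$, for a total of $\sec^5\theta\,K$. For the final, additive statement I would use Theorem~\ref{t2'} with $\Phi=\mathrm{id}$, $p=1$, which carries no $\sec\theta$ factor of its own (the $\cos^2\theta$ disappears because $\Re A\,\nabla_v\,\Re B=\Re(A\nabla_v B)$), giving $\Re(A\nabla_v B)\leqslant K\,\Re(A\sigma B)$; one more application of Lemma~\ref{32} on each side then yields $\|A\nabla_v B\|\leqslant\sec\theta\,K\,\|A\sigma B\|$.

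There is no genuine obstacle here: the entire argument is bookkeeping of the $\sec\theta$ exponents, one arising from the operator inequality and exactly one extra from the passage to the norm via Lemma~\ref{32}. The only points requiring care are verifying that $\alpha=K$ at $p=1$ and checking that each specialization meets the hypotheses of the theorem it invokes (in particular the constraint on $\Re A^{-1},\Re B^{-1}$ for the second bullet), both of which are immediate.
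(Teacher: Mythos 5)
Your proposal is correct and is exactly the argument the paper intends: the theorem is stated as an immediate consequence of Theorems \ref{t2}, \ref{t2'} and \ref{t5} (specialized to $\Phi=\mathrm{id}$, $p=1$, where indeed $\alpha=K$) sandwiched between the two halves of Lemma \ref{32}, using norm monotonicity on the positive matrices $\Re(A\sigma_i B)$. Your bookkeeping of the $\sec\theta$ exponents ($\sec^{2}\theta$, $\sec^{4}\theta$, $1$ from the operator inequalities, plus one factor of $\sec\theta$ from Lemma \ref{32}) reproduces the stated constants $\sec^{3}\theta\,K$, $\sec^{5}\theta\,K$ and $\sec\theta\,K$ precisely.
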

%%%%%%%%%%%%%%%%%%%%%%%%%%%%%%
At the end of this section, we prove similar determinant and norm inequalities for $A\sigma B$.
\begin{lemma}\label{80}(\cite{Yan3})
If $A,B\in {\mathcal S}_{\theta}$, then
\begin{equation}\label{80}
\mid\det(A+ B)\mid\leqslant\sec^{2n}\theta\ \mid\det(I_n+A)\mid.\mid\det(I_n+B)\mid.
\end{equation}
and
\begin{equation}\label{81}
\|A+B\|\leqslant\sec\ \theta\ \|I_n+A\|.\|I_n+B\|.
\end{equation}
\end{lemma}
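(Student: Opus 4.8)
The plan is to reduce each of \eqref{80} and \eqref{81} to its analogue for positive definite matrices, and to pay for the passage from a sector matrix to its real part with Lemma \ref{67} (determinant) and Lemma \ref{32} (norm). First I would note that the hypotheses place all the relevant matrices in $\mathcal S_{\theta}$: this class is a convex cone that contains the positive definite matrices (the case $\theta=0$), since $W(A+B)\subseteq W(A)+W(B)$ and the sector is stable under sums; hence from $A,B\in\mathcal S_{\theta}$ and $I>0$ one gets $A+B\in\mathcal S_{\theta}$ and $I+A,\,I+B\in\mathcal S_{\theta}$. This is exactly what is needed to apply Lemmas \ref{67} and \ref{32} to $A+B$, $I+A$ and $I+B$ separately.

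The analytic heart, free of any $\theta$, is the positive definite case: for $P,Q>0$ I would prove $\det(P+Q)\le\det(I+P)\det(I+Q)$ and $\|P+Q\|\le\|I+P\|\,\|I+Q\|$. For the determinant, set $\widetilde Q=(I+P)^{-1/2}Q(I+P)^{-1/2}$, so that $I+P+Q=(I+P)^{1/2}(I+\widetilde Q)(I+P)^{1/2}$ and hence $\det(I+P+Q)=\det(I+P)\det(I+\widetilde Q)$; since $(I+P)^{-1}\le I$ the eigenvalues of $\widetilde Q$ are dominated by those of $Q$, giving $\det(I+\widetilde Q)\le\det(I+Q)$, while $\det(P+Q)\le\det(I+P+Q)$ by monotonicity of $\det$ on the positive cone. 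For the norm the key one-line estimate is $\|I+P\|\ge 1+\|P\|$: choosing $Z\ge0$ with dual norm $\|Z\|_{*}=1$ and $\operatorname{tr}(PZ)=\|P\|$, the triangle inequality applied to the spectral decomposition of $Z$ together with the normalization $\|e_ie_i^{*}\|_{*}=1$ gives $\operatorname{tr}Z\ge\|Z\|_{*}=1$, whence $\|I+P\|\ge\operatorname{tr}((I+P)Z)=\operatorname{tr}Z+\operatorname{tr}(PZ)\ge 1+\|P\|$; then $\|P+Q\|\le\|P\|+\|Q\|\le(1+\|P\|)(1+\|Q\|)\le\|I+P\|\,\|I+Q\|$.

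The assembly is then routine. For \eqref{80}, the upper estimate of Lemma \ref{67} on $A+B$ gives $|\det(A+B)|\le\sec^{n}\theta\,\det(\Re A+\Re B)$, the positive case gives $\det(\Re A+\Re B)\le\det(I+\Re A)\det(I+\Re B)$, and the lower estimate of Lemma \ref{67} on $I+A$, $I+B$ gives $\det(I+\Re A)\le|\det(I+A)|$ and $\det(I+\Re B)\le|\det(I+B)|$; chaining these yields \eqref{80}. The same three steps with Lemma \ref{32} replacing Lemma \ref{67} yield \eqref{81}, the single upper estimate $\|A+B\|\le\sec\theta\,\|\Re A+\Re B\|$ accounting for the lone factor $\sec\theta$.

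I expect the positive definite norm inequality to be the real obstacle, and in particular the estimate $\|I+P\|\ge 1+\|P\|$. It is tempting to derive $\|P+Q\|\le\|I+P\|\,\|I+Q\|$ from an operator inequality such as $P+Q\le(I+P)^{1/2}(I+Q)(I+P)^{1/2}$, but this fails because $(I+P)^{1/2}Q(I+P)^{1/2}\ge Q$ is false in general; the argument must therefore be carried out at the level of eigenvalues for the determinant and of the dual norm for the unitarily invariant norm, rather than through a single Loewner comparison. Finally I would remark that this route in fact establishes \eqref{80} with the sharper constant $\sec^{n}\theta$ in place of $\sec^{2n}\theta$, so no sharpness is lost in the reduction.
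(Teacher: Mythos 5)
Your proposal is correct, but note that there is nothing internal to compare it against: the paper does not prove this lemma, it quotes it from \cite{Yan3}, so any proof must be reconstructed from scratch, which is what you have done. Your reconstruction is sound. The reduction is legitimate ($\mathcal S_{\theta}$ is closed under sums and under adding $I$, since $W(A+B)\subseteq W(A)+W(B)$ and $1+W(A)$ stay in the sector), the determinant step via the congruence $I+P+Q=(I+P)^{1/2}(I+\widetilde Q)(I+P)^{1/2}$ plus Weyl monotonicity $\lambda_j(\widetilde Q)=\lambda_j\bigl(Q^{1/2}(I+P)^{-1}Q^{1/2}\bigr)\leqslant\lambda_j(Q)$ is airtight, and the duality proof of $\|I+P\|\geqslant 1+\|P\|$ is the right way around the false Loewner comparison you flag. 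Your closing observation is also correct: chaining Lemma \ref{67} twice with the positive-definite case yields the determinant inequality with constant $\sec^{n}\theta$, strictly better than the stated $\sec^{2n}\theta$; the weaker stated constant is what one gets by instead passing through the singular-value estimate $s_j(X)\leqslant\sec^{2}\theta\,\lambda_j(\Re X)$ of Lemma \ref{31}, which is presumably the route taken in \cite{Yan3}, while your norm constant $\sec\theta$ matches \eqref{81} exactly.

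One point you should make explicit rather than leave implicit: the norm inequality \eqref{81} is not invariant under rescaling of the norm (the left side is degree one in the norm, the right side degree two), so it can only hold for unitarily invariant norms normalized by $\|\mathrm{diag}(1,0,\dots,0)\|=1$; indeed for $c\|\cdot\|_{\mathrm{op}}$ with $c<\tfrac12$ it already fails at $P=Q=I$, $\theta=0$. Your duality argument uses precisely this normalization through the identity $\|u_iu_i^{*}\|_{*}=1$, which is not automatic but does follow from $\|\mathrm{diag}(1,0,\dots,0)\|=1$ (one checks $\|E_{11}\|=1$ forces $\|E_{11}\|_{*}=1$, since $|W_{11}|\leqslant s_1(W)\leqslant\|W\|$ for normalized norms). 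With that convention stated, and with it the inequality $\operatorname{tr}Z\geqslant\|Z\|_{*}$ for $Z\geqslant0$, your proof is complete.
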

%%%%%%%%%%%%%%%%%%%%%%%%%%%
\begin{theorem}
Let $A,B\in S_{\theta}$ and $\sigma$ be an arbitrary mean such that $\sigma\leqslant\nabla$. Then
\begin{equation*}
\mid\det(A\sigma B)\mid\leqslant\frac{\sec^{4n}\theta}{2^n} \mid\det(I_n+A)\mid.\mid\det(I_n+B)\mid.
\end{equation*}
and
\begin{equation*}
\|A\sigma B\|\leqslant\frac{\sec^{4}\theta}{2} \|I_n+A\|.\|I_n+B\|.
\end{equation*}
\end{theorem}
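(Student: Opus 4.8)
The plan is to handle both inequalities with a single four-step template, reducing the accretive quantity $A\sigma B$ to a positive-definite estimate for the arithmetic mean and then factoring through $I_n+A$ and $I_n+B$. The common skeleton is: (i) pass from $A\sigma B$ to its real part, using Lemma \ref{32} for the norm and Lemma \ref{67} for the determinant; (ii) invoke Lemma \ref{2} to replace $\Re(A\sigma B)$ by $\Re A\,\sigma\,\Re B$ at the cost of one factor $\sec^2\theta$; (iii) use the hypothesis $\sigma\leqslant\nabla$ together with $\Re A,\Re B>0$ to dominate $\Re A\,\sigma\,\Re B\leqslant \Re A\,\nabla\,\Re B=\tfrac12(\Re A+\Re B)=\Re(A+B)/2$; and (iv) feed the sum $\Re A+\Re B=\Re(A+B)$ into a product inequality. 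Throughout I would use that both the determinant and every unitarily invariant norm are monotone on positive matrices (via $\lambda_j(X)\leqslant\lambda_j(Y)$ when $0\leqslant X\leqslant Y$), which is what makes steps (ii)--(iii) legitimate once Lemma \ref{2} is phrased as a positive-semidefinite inequality.

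For the norm inequality I expect the template to land \emph{exactly} on the stated constant. Step (i) gives $\|A\sigma B\|\leqslant\sec\theta\,\|\Re(A\sigma B)\|$ through Lemma \ref{32}; steps (ii)--(iii) give $\|\Re(A\sigma B)\|\leqslant \tfrac{\sec^2\theta}{2}\,\|\Re(A+B)\|$ by norm monotonicity; and step (iv) uses the lower bound of Lemma \ref{32}, namely $\|\Re(A+B)\|\leqslant\|A+B\|$, followed by the norm part of Lemma \ref{80}, $\|A+B\|\leqslant\sec\theta\,\|I_n+A\|\,\|I_n+B\|$. Multiplying the three sectorial factors yields $\sec^{1+2+1}\theta=\sec^4\theta$, together with the harmonic factor $\tfrac12$, so the chain closes on $\tfrac{\sec^4\theta}{2}\,\|I_n+A\|\,\|I_n+B\|$.

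The determinant inequality runs through the same four steps with Lemma \ref{67} replacing Lemma \ref{32} and determinant monotonicity replacing norm monotonicity, each sectorial factor now raised to the $n$-th power; steps (i)--(iii) produce $|\det(A\sigma B)|\leqslant \tfrac{\sec^{3n}\theta}{2^n}\det(\Re A+\Re B)$, with the exponent $3n=n+2n$ coming from Lemma \ref{67} and Lemma \ref{2}. The step I expect to be the main obstacle is the final factorization of $\det(\Re A+\Re B)$: routing it through the determinant part of Lemma \ref{80} costs an extra $\sec^{2n}\theta$ and is lossy, so instead I would use the positive-definite submultiplicativity $\det(P+Q)\leqslant\det(I_n+P)\det(I_n+Q)$ for $P,Q>0$ — which one can obtain from $\det(P+Q)\leqslant\det(I_n+P+Q)$ and the concavity of $t\mapsto\log\det(I_n+P+tQ)$ — applied to $P=\Re A$, $Q=\Re B$, followed by $\det(I_n+\Re A)=\det\Re(I_n+A)\leqslant|\det(I_n+A)|$ from Lemma \ref{67} (and likewise for $B$), noting that $I_n+A,I_n+B\in\mathcal S_\theta$. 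This contributes no further sectorial factor, giving $\tfrac{\sec^{3n}\theta}{2^n}\,|\det(I_n+A)|\,|\det(I_n+B)|$, from which the stated $\tfrac{\sec^{4n}\theta}{2^n}$ bound follows a fortiori since $\sec\theta\geqslant1$. The delicate bookkeeping is thus entirely in choosing the sharpest determinant factorization at step (iv); the first three steps are routine applications of the sectorial lemmas and of $\sigma\leqslant\nabla$.
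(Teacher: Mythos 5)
Your norm inequality argument coincides with the paper's proof: Lemma \ref{32}, then $\Re(A\sigma B)\leqslant\sec^2\theta\,(\Re A\,\sigma\,\Re B)\leqslant\tfrac{\sec^2\theta}{2}\Re(A+B)$ (the paper cites Lemma \ref{2.10}, whose content is exactly your steps (ii)--(iii)), then the lower bound of Lemma \ref{32} followed by \eqref{81}; both chains land on $\sec^4\theta/2$.

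The determinant half is where you genuinely depart, and your route is the better one. The paper finishes by passing from $\det(\Re(A+B))$ to a determinant of the sum of the sectorial matrices and invoking \eqref{80}, which costs $\sec^{2n}\theta$; tallied honestly, its chain gives $n+2n+2n=5n$, i.e.\ the constant $\sec^{5n}\theta/2^n$, so the paper's claimed jump from $\sec^{3n}\theta$ to $\sec^{4n}\theta$ in the last displayed step is a bookkeeping slip (the theorem as stated survives only because a sharper route exists). Your step (iv) is that sharper route: $\det(P+Q)\leqslant\det(I_n+P)\det(I_n+Q)$ for $P=\Re A$, $Q=\Re B$, followed by $\det(I_n+\Re A)=\det(\Re(I_n+A))\leqslant|\det(I_n+A)|$ from Lemma \ref{67}, which adds no sectorial factor and yields $\sec^{3n}\theta/2^n$, implying the stated bound since $\sec\theta\geqslant1$. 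Note that your auxiliary inequality is precisely \eqref{80} specialized to angle $0$, so you could also have cited it directly for the positive matrices $\Re A,\Re B\in\mathcal S_0$ instead of proving it.

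One repair is needed in that proof, however: deriving $\det(I_n+P+Q)\leqslant\det(I_n+P)\det(I_n+Q)$ from concavity of $t\mapsto\log\det(I_n+P+tQ)$ does not work. Concavity gives the tangent-line bound $\log\det(I_n+P+Q)\leqslant\log\det(I_n+P)+\mathrm{tr}\bigl((I_n+P)^{-1}Q\bigr)$, and $\mathrm{tr}\bigl((I_n+P)^{-1}Q\bigr)$ can exceed $\log\det(I_n+Q)$ (take $P=0$ and $Q=\lambda I_n$ with $\lambda$ large: $n\lambda>n\log(1+\lambda)$), so the bound overshoots the target. The inequality itself is true; a correct short proof is the congruence
\begin{align*}
\det(I_n+P+Q)&=\det(I_n+P)\det\bigl(I_n+(I_n+P)^{-1/2}Q(I_n+P)^{-1/2}\bigr)\\
&=\det(I_n+P)\det\bigl(I_n+Q^{1/2}(I_n+P)^{-1}Q^{1/2}\bigr)\leqslant\det(I_n+P)\det(I_n+Q),
\end{align*}
using $\det(I_n+XY)=\det(I_n+YX)$ and $Q^{1/2}(I_n+P)^{-1}Q^{1/2}\leqslant Q$. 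With that substitution your argument is complete, and it establishes a constant strictly sharper than the one in the paper.
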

\begin{proof}
Compute
\begin{align*}
\mid\det(A\sigma B)\mid&\leqslant\sec^{n}\theta\det(\Re(A\sigma B))&&(\text{by Lemma  \ref{67} })\\
&\leqslant\frac{\sec^{3n}\theta}{2^n}\det(\Re(A+B))&&(\text{by Lemma \ref{2.10} })\\
&\leqslant\frac{\sec^{3n}\theta}{2^n}\mid\det(\Re(A+B)\mid&&(\text{by Lemma  \ref{67} })\\
&\leqslant\frac{\sec^{4n}\theta}{2^n} \mid\det(I_n+A)\mid.\mid\det(I_n+B)\mid.&&(\text{by   \eqref{80} })
\end{align*}
and
\begin{align*}
\|A\sigma B\|&\leqslant\sec\theta\|\Re(A\sigma B))\|&&(\text{by Lemma  \ref{32}})\\
&\leqslant\frac{\sec^{3}\theta}{2}\|\Re(A+B)\|&&(\text{by Lemma \ref{2.10}})\\
&\leqslant\frac{\sec^{3}\theta}{2}\|A+B\|&&(\text{by Lemma  \ref{32} })\\
&\leqslant\frac{\sec^{4}\theta}{2} \|I_n+A\|.\|I_n+B\|.&&(\text{by   \eqref{81} })
\end{align*}
\end{proof}

%%%%%%%%%%%%%%%%%%%%%%%%%
%%%%%%%%%%%%%%%%%%%%%%%%%
%%%%%%%%%%%%%%%%%%%%%%%%%
\section{Numerical range of sector matrices}
%%%%%%%%%%%%%%%%%%%%%%%%%%%
The numerical radius $\omega(A)$ of $A\in \Bbb M_n$ is defined by
\[
\omega(A)=\sup\{\langle Ax,x\rangle : x\in \mathbb{C}^n, \|x\|=1\}.
\]
When $A\in S_0$, we have $\omega(A)=\|A\|$. Therefore
\begin{equation}\label{3.6}
\omega(\Re A)=\|\Re A\|.
\end{equation}
%%%%%%%%%%%%%%%%%%%%%%%%%%%%

 %%%%%%%%%%%%%%%%%%%%%%%
Bedrani et al. \cite{bed1} showed if $ A,B\in {{\mathcal S}_{\theta}}$, $\nu\in[0,1]$ and $f\in \mathbf{m}$, then

 \begin{equation}\label{3.2}
f(\|\Re A\|)\leqslant\|\Re(f(A))\|\leqslant\sec^2\theta f(\|\Re A\|),
\end{equation}
\begin{equation}\label{3.1}
\omega(\Re A)\leqslant \omega(A)\leqslant \sec\theta\ \omega(\Re A).
\end{equation}
%%%%%%%%%%%%%%%%%%%%%%%%%%%%%

%%%%%%%%%
\begin{theorem}\label{t7}
Let $A, B\in\mathcal S_{\theta}$ such that $0<mI\leqslant\Re A,\Re B\leqslant MI$. If $f\in \mathbf{m}$, and $\sigma^*\leqslant\sigma_1,\sigma_2\leqslant\sigma$ for some operator mean $\sigma$, then
\begin{equation*}
f(\omega(A\sigma_1 B))\leqslant\sec^3\theta\ K\omega\left(f(A\sigma_2 B)\right).
\end{equation*}
\begin{proof}
Using Theorem $\ref{t2}$, concavity of $f$, \eqref{3.2} and \eqref{3.1}, we have
\begin{align*}
 f\left(\omega(A\sigma_1 B)\right)&\leqslant f\left(\sec\theta\ \omega(\Re(A\sigma_1 B))\right)\\
&\leqslant f\left(\sec\theta\ \omega(\sec^2\theta K\Re( A\sigma_2 B))\right)\\
&= f\left(\sec^3\theta K\omega(\Re(A\sigma_2 B)\right)\\
&\leqslant\sec^3\theta K f\left(\omega(\Re(A\sigma_2 B)\right)\\
&= \sec^3\theta Kf\left(\|\Re(A\sigma_2 B)\|\right)\\
&\leqslant\sec^3\theta K \|\Re f(A\sigma_2 B)\|\\
&=\sec^3\theta K\omega\left(\Re(f(A\sigma_2 B)\right)\\
&\leqslant\sec^3\theta K\omega\left(f(A\sigma_2 B)\right),
\end{align*}
which completes the proof.
\end{proof}
\end{theorem}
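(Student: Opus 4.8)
The plan is to bound $f(\omega(A\sigma_1 B))$ through a single chain of inequalities that removes, one layer at a time, the outer monotone function $f$, the numerical radius, and the real part, replacing each object built from $A\sigma_1 B$ by the corresponding object built from $A\sigma_2 B$. All the structural facts I need are already available: $f\in\mathbf m$ is nondecreasing and operator concave with $f(0)\geqslant0$; the numerical radius and the operator norm agree on the positive cone by \eqref{3.6} and are monotone there; and the two-sided comparisons \eqref{3.1} and \eqref{3.2} let me trade $\omega(\cdot)$ against $\omega(\Re\,\cdot)$ and $f(\|\Re\,\cdot\|)$ against $\|\Re f(\cdot)\|$ at the cost of a power of $\sec\theta$. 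The one genuinely new ingredient is Theorem \ref{t2}, which I would invoke with $p=1$ and $\Phi$ equal to the identity map to obtain $\Re(A\sigma_1 B)\leqslant\sec^2\theta\,K\,\Re(A\sigma_2 B)$; this is the step that simultaneously performs the passage from $\sigma_1$ to $\sigma_2$ and introduces the Kantorovich constant.

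Concretely, I would first apply the upper half of \eqref{3.1} to $A\sigma_1 B$ and use monotonicity of $f$ to write $f(\omega(A\sigma_1 B))\leqslant f(\sec\theta\,\omega(\Re(A\sigma_1 B)))$. Since $\Re(A\sigma_1 B)$ and $\Re(A\sigma_2 B)$ are positive, the numerical radius is monotone for the Loewner order there, so the $p=1$ case of Theorem \ref{t2} upgrades the argument to $f(\sec\theta\cdot\sec^2\theta\,K\,\omega(\Re(A\sigma_2 B)))=f(\sec^3\theta\,K\,\omega(\Re(A\sigma_2 B)))$. Because $\sec^3\theta\,K\geqslant1$, concavity of $f$ lets me pull the scalar out, $f(\sec^3\theta\,K\,x)\leqslant\sec^3\theta\,K\,f(x)$, and I am left with $f(\omega(\Re(A\sigma_2 B)))$. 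I then rewrite $\omega(\Re(A\sigma_2 B))=\|\Re(A\sigma_2 B)\|$ by \eqref{3.6}, apply the lower half of \eqref{3.2} to get $f(\|\Re(A\sigma_2 B)\|)\leqslant\|\Re f(A\sigma_2 B)\|$, return to a numerical radius by \eqref{3.6}, and finish with the lower half of \eqref{3.1} to reach $\omega(\Re f(A\sigma_2 B))\leqslant\omega(f(A\sigma_2 B))$. The applications of \eqref{3.2} and the Lemma \ref{9}-type facts are legitimate because $A\sigma_2 B\in\mathcal S_\theta$ by Lemma \ref{2}.

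The computation itself is routine; the difficulty is entirely in the orchestration. The point that most needs care is the \emph{choice of direction} in the two-sided estimates: the two uses of \eqref{3.1} must sit on opposite sides (upper to descend from $\omega(A\sigma_1 B)$ to $\omega(\Re(A\sigma_1 B))$, lower to ascend from $\omega(\Re f(A\sigma_2 B))$ to $\omega(f(A\sigma_2 B))$), and \eqref{3.2} must be used in its lower direction, so that the loose secant factors compound to exactly $\sec\theta\cdot\sec^2\theta=\sec^3\theta$ and nothing worse. The only substantive maneuver is the concavity step that frees the scalar $\sec^3\theta\,K$ from inside $f$; everything on either side of it is monotonicity plus the dictionary between $\omega$, $\|\cdot\|$, and $\Re$ on positive matrices.
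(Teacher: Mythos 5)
Your proposal is correct and follows essentially the same route as the paper's own proof: the identical chain of inequalities, using the upper half of \eqref{3.1} with monotonicity of $f$, then Theorem \ref{t2} with $p=1$ and $\Phi=\mathrm{id}$ together with monotonicity of $\omega$ on positive matrices, the concavity step $f(\lambda x)\leqslant\lambda f(x)$ for $\lambda\geqslant1$, the dictionary \eqref{3.6} between $\omega$ and $\|\cdot\|$, the lower half of \eqref{3.2}, and finally the lower half of \eqref{3.1}. Your explicit remarks that $A\sigma_2B\in\mathcal S_\theta$ (via Lemma \ref{2}) justifies the use of \eqref{3.2}, and on which side of each two-sided estimate is needed, are points the paper leaves implicit, but the argument is the same.
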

\begin{remark}
If we use Theorem \ref{t2'} instead of Theorem \ref{t2}, we have
\begin{equation*}
f(\omega(A\nabla_v B))\leqslant\sec\theta\ K\omega\left(f(A\sigma B)\right),
\end{equation*}
for $!_v\leqslant\sigma\leqslant\nabla_v$.
\end{remark}
%%%%%%%%%%%%%%%%%%%
The next proposition is reverse of \cite[Theorem 3.5]{bed2}.
\begin{proposition}\label{p4.1}
Let $A,B\in {\mathcal S}_{\theta}$ such that $0<mI\leqslant \Re A,\Re B\leqslant MI$. Then for every positive unital linear map $\Phi$ and $\sigma_1$ and $\sigma_2$ between $\nabla_{\nu}$ and $!_{\nu}$,
\begin{equation*}
\omega\left(f(\Phi(A))\sigma_1f(\Phi(B))\right)\leqslant K\sec^5\theta\ \omega\left( f(\Phi(A\sigma_2B))\right).
\end{equation*}
for $f\in \mathbf{m}$.
\begin{proof}
From Theorem $\ref{t4}$ and \eqref{3.1}, we have
\begin{align*}
\omega\left(f(\Phi(A))\sigma_1f(\Phi(B))\right)&\leqslant\sec\theta\ \omega\left(\Re f(\Phi(A))\sigma_1f(\Phi(B))\right)\\
&\leqslant K\sec^5\theta\ \omega\left(\Re f(\Phi(A\sigma_2B))\right)\\
&\leqslant K\sec^5\theta\ \omega\left( f(\Phi(A\sigma_2B))\right).
\end{align*}
\end{proof}
\end{proposition}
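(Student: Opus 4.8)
The plan is to recognize that the statement is essentially Theorem~\ref{t4} transported from the real part $\Re(\cdot)$ to the numerical radius $\omega(\cdot)$, with the two-sided comparison \eqref{3.1} supplying the gap between the two. Concretely, I would bound $\omega(f(\Phi(A))\sigma_1 f(\Phi(B)))$ from above by a chain of three steps: first strip the radius down to the radius of the real part, then invoke Theorem~\ref{t4}, and finally restore the radius on the right-hand side. Each of the two applications of \eqref{3.1} contributes a single factor $\sec\theta$, and Theorem~\ref{t4} contributes $K\sec^4\theta$, so the product is exactly $K\sec^5\theta$.

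First I would note that $\Phi(A),\Phi(B)\in\mathcal S_\theta$: writing the sector condition as $\tan\theta\,\Re A\pm\Im A\geqslant0$ and applying the positive map $\Phi$ preserves both inequalities, while $\Re\Phi(A)=\Phi(\Re A)>0$. Consequently $f(\Phi(A)),f(\Phi(B))\in\mathcal S_\theta$ and, by Lemma~\ref{2}, so is $f(\Phi(A))\sigma_1 f(\Phi(B))$; this legitimizes applying the upper bound in \eqref{3.1} to obtain
\begin{equation*}
\omega\bigl(f(\Phi(A))\sigma_1 f(\Phi(B))\bigr)\leqslant\sec\theta\,\omega\bigl(\Re(f(\Phi(A))\sigma_1 f(\Phi(B)))\bigr).
\end{equation*}
Next, Theorem~\ref{t4} gives the operator inequality $\Re(f(\Phi(A))\sigma_1 f(\Phi(B)))\leqslant K\sec^4\theta\,\Re f(\Phi(A\sigma_2 B))$ between two positive semidefinite matrices; since the numerical radius of a positive semidefinite matrix equals its largest eigenvalue, $\omega$ is both monotone and positively homogeneous here, so I may apply it to the inequality to pull out the scalar $K\sec^4\theta$. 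A last application of \eqref{3.1}, now its lower bound $\omega(\Re X)\leqslant\omega(X)$ with $X=f(\Phi(A\sigma_2 B))$, removes the real part on the right and yields the claim.

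The only genuinely nontrivial point is the step where $\omega$ is applied to the matrix inequality coming from Theorem~\ref{t4}; I expect this to be the main obstacle to state carefully. It works only because both sides are positive semidefinite, being real parts of sector matrices, so that $\omega$ coincides with the spectral radius and therefore respects the L\"owner order and scalar multiplication. Everything else---membership in $\mathcal S_\theta$, the concavity of $f\in\mathbf m$ already exploited inside Theorem~\ref{t4}, and the bookkeeping of the powers of $\sec\theta$---is routine once this monotonicity of $\omega$ on the positive cone is in place.
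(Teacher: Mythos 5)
Your proposal is correct and follows essentially the same route as the paper: one application of the upper bound in \eqref{3.1} (factor $\sec\theta$), then Theorem \ref{t4} together with the monotonicity of $\omega$ on positive semidefinite matrices (factor $K\sec^4\theta$), then the lower bound $\omega(\Re X)\leqslant\omega(X)$ from \eqref{3.1}. The only difference is that you spell out the justifications the paper leaves implicit (that $\Phi(A),\Phi(B)$ and hence $f(\Phi(A))\sigma_1 f(\Phi(B))$ lie in $\mathcal S_\theta$, and that $\omega$ respects the L\"owner order on the positive cone), which is a welcome clarification but not a different argument.
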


\section*{Statements \& Declarations}
The authors declare that no funds, grants, or other support were received during the preparation of this manuscript.

The authors have no relevant financial or non-financial interests to disclose.

All authors contributed to the design and implementation of the research, to the analysis of the 
results and to the writing of the manuscript. All authors read and approved the final manuscript.

%%%%%%%%%%%%%%%%%%%%%%%%%%%%
%%%%%%%%%%%%%%%%%%%%%%%%%%%%


\begin{thebibliography}{99}

 %**********************************

 \bibitem{An}
T. Ando, \textit{Concavity of certain maps on positive definite matrices and applications to Hadamard prodcts}, Linear Algebra Appl. \textbf{26} (1979), 203-241.

\bibitem{Ando}
T. Ando and X. Zhan, \textit{Norm inequalities related to operator monoton functions}, Math Ann. \textbf{315} (1999),
771–780.

\bibitem{bed1}{ Y. Bedrani, F. Kittaneh and M. Sababheh}, \textit{From positive to accretive matrices}, {Positivity} {\bf 25} (2021), 1601–1629.
%****************
\bibitem{bed2}{ Y. Bedrani, F. Kittaneh and M. Sababheh}, \textit{Numerical radii of accretive matrices}, {Linear Multilinear Algebra} {\bf 69} (2021), 957-970 .

 %****************
\bibitem{Bha1}
R. Bhatia, \textit{Matrix Analysis}, Springer-Verlag, New York 1997.
%****************
\bibitem{bh}
R. Bhatia and F. Kittaneh, \textit{Notes on matrix arithmetic-geometric mean inequalities}, Linear Algebra
Appl. \textbf{308} (2000), 203–211.
%****************
\bibitem{cho}
D. Choi, T. Tam and P. Zhang, \textit{Extension of Fischer$^,$s inequality}, Linear Algebra Appl. \textbf{569} (2019), 311-322.
%****************
\bibitem{Dur}
S. Drury and M. Lin, \textit{Singular value inequalities for matrices with numberical ranges in a sector}, Oper. Matrices \textbf{8} (2014), 1143-1148.
%*****************************************
\bibitem{fu}
M. Fujii, S. Izumino, R. Nakamoto and Y. Seo, \textit{Operator inequalities related to Cauchy-Schwarz and Holder-
McCarthy inequalities}, Nihonkai Math. J. \textbf{8} (1997), 117–122.
%****************
\bibitem{Hor}
R.A. Horn and C.R. Johnson, \textit{Matrix Analysis}, Cambridge University Press, Cambridge, 2013.
%*************************
\bibitem{k17}
M. Khosravi, M. S. Moslehian and A. Sheikhhosseini, \textit{Some operator inequalities involving operator means and positive linear maps}, Linear Multilinear Algebra.\textbf{66}  (2018), 1186-1198.
%*************************

 \bibitem{a}
F. Kubo and T. Ando, \textit{ Means of positive linear operators}, Math. Ann. \textbf{248} (1980), 205-224.
%*************************
\bibitem{Lin1}
M. Lin, \textit{Extension of a result of Hanynsworth and Hartfiel}, Arch. Math. \textbf{104} (2015), 93-100.
%*************************
\bibitem{Lin2}
M. Lin, \textit{Some inequalities for sector matrices}, Oper. Matrices. \textbf{10} (2016), 915-921.
%***********************
\bibitem{Liu}
J.T. Liu, Q.W. Wang \textit{More inequalities for sector matrices}, Bull. Iran. Math. Soc. {\bf 44} (2018), 1059–1066.

%*************************
\bibitem{Mal} S. Malekinejad, M. Khosravi
and A. Sheikhhosseini, \textit{ Mean inequalities for sector matrices
involving positive linear maps}, Positivity, In print.


%%%%%%%%%%%%%%
\bibitem{Mao}
Y. Mao and Y. Mao, \textit{Inequalities for the Hienz Mean of Sector Matrices}, Bull. Iran. Math. Soc. \textbf{46} (2020), 1767–1774.
%*********************
\bibitem{nas2}
L. Nasiri and S. Furuichi, \textit{On a reverse of the Tan-Xie inequality for sector matrices and its applications}, J. Math. Inequalities
\textbf{15} (2021), 1425-1434.
%*********************
\bibitem{pec}
J. E. Pe$\check{c}$ari$\acute{c}$, T. Furuta, J. Mi$\acute{c}$i$\acute{c}$ Hot and Y. Seo, \textit{Mond-pe$\check{c}$ari$\acute{c}$ Method in operator inequalities}, Element, Zagreb, 2005.


%******************
\bibitem{Rai}
M. Raissouli, M.S. Moslehian and S.Furuichi, \textit{Relative entropy and Tsallis entropy of two accretive operators}, C. R. Math. Acad. Sci. Paris, Ser. I, \textbf{355}  (2017), 687-693.
%*********************
\bibitem{Yan2}
C. Yang, Y. Gao and F. Lu, \textit{Some reverse mean inequalities for operators and matrices}, J. Inequal. Appl. (2019), Art. 115.
%*********************
\bibitem{Yan}
C. Yang and F. Lu, \textit{Inequalities for the Heinz mean of sector matrices involving linear maps}, Ann.  Funct. Anal. \textbf{11} (2020), no. 3, 866 - 878.

%*********************
\bibitem{Yan3}
C. Yang and F. Lu, \textit{Some generalizations of inequalities for sector matrices}, J. Inequal. Appl. (2018), Art. 183.
%%%%%%%%%%%%%%%%%%%%%%%
\bibitem{zha3}
F. Zhan, \textit{A matrix decomposition and its applications}, Linear Multilinear Algebra. \textbf{63} (2015), 2033-2042.
 \end{thebibliography}
\end{document}